\newtheorem{theorem}{Theorem}
\newtheorem{corollary}{Corollary}
\newtheorem{lemma}{Lemma}
\newtheorem{remark}{Remark}
\newtheorem{example}{Example}
\newtheorem{definition}{Definition}
\newtheorem{proposition}{Proposition}
\newtheorem{assumption}{Assumption}
\newcommand{\sharedroads}{\mathcal{N}}
\newcommand{\typesonroad}{\mathcal{M}}
\newcommand{\roadset}{{[}n{]}} 
\newcommand{\vtypeset}{{[}m{]}}
\newcommand{\prevflow}{{z^*}}
\newcommand{\newflow}{{\tilde{z}}}
\newcommand{\hz}{\hat{z}}
\newcommand{\sz}{{z^{*}}}
\newcommand{\hf}{\hat{f}}
\newcommand{\fs}{{f^{*}}}
\DeclareMathOperator*{\argmin}{arg\,min}
\title{\LARGE \bf
The Role of Differentiation in Tolling of Traffic Networks \\ with Mixed Autonomy
}
\author{Daniel A.~Lazar and Ramtin~Pedarsani
	\thanks{The authors are with the Department of Electrical and Computer Engineering, 
		University of California, Santa Barbara
		{\tt\small \{dlazar, ramtin\}@ucsb.edu}}
}
\begin{document}

\maketitle
\thispagestyle{empty}
\pagestyle{empty}

\begin{abstract}
With autonomous vehicles now sharing roads with human drivers, the era of \emph{mixed autonomy} brings new challenges in dealing with congestion. One cause of congestion is when vehicle users choose their routes selfishly to minimize their personal travel delay rather than a global travel delay, and prior works address this phenomenon using tolling to influence routing choices, but do not address the setting of mixed autonomy. Tolls may be \emph{differentiated}, meaning different users of a road experience different tolls, or they may be \emph{anonymous}; the latter is desirable to allay concerns of fairness and privacy, as well as logistical challenges. In this work we examine the role of differentiation in traffic networks with mixed autonomy. Specifically, we first establish differentiated tolls which completely eliminate inefficiency due to selfish routing. We then show the fundamental limitations of anonymous tolls in our setting, and we provide anonymous tolls with mild performance guarantees. We show that in parallel networks, an infinitesimal differentiation in tolls is enough to guarantee optimality, and finally we establish a lower bound on the inefficiency of variable marginal cost tolling in the mixed autonomy setting.
\end{abstract}

\section{INTRODUCTION}
\label{sct:intro}
Autonomous vehicles have been presented as a way to greatly decrease road congestion in popular culture and academic works \cite{noguchi2017npr,lioris2017platoons}. However, recent work shows that in the realm of \emph{mixed autonomy}, where some vehicles are autonomous and some are human-driven, if vehicle users choose routes to minimize their own travel times, switching some vehicles from human-driven to autonomous can paradoxically \emph{worsen} congestion \cite{mehr2019will}. There is also a very large gap between the total delay under selfish routing as compared to the minimum total delay possible (known as the \emph{Price of Anarchy}) in the mixed-autonomy setting \cite{lazar2020routing}.

When drivers consist of a single class of vehicles, well-known tolling schemes can eliminate inefficiency \cite{pigou1932economics,  beckmann1956studies, dafermos1973toll} and the \emph{social cost}, or total delay experienced by all users, can be reduced to the minimum that it would be with all vehicles routed by a social planner. However, in the general mixed autonomy setting, optimal tolls are unknown, absent very restrictive assumptions \cite{mehr2019pricing, lazar2020optimal}. In our work, we provide optimal tolls for a broad class of settings, namely general networks with multiple source-destination pairs and with multiple vehicle types, where road latency is an affine function of vehicle flows. We do so via a simple, and to the best of our knowledge, novel, application of the Variational Inequality. With the tolls we provide, network inefficiency due to selfish routing is eliminated.

The optimal tolls we derive are \emph{differentiated}, meaning that the different vehicle types must experience different tolls. However, it can be difficult to administer these differentiated tolls, due to logistical challenges and concerns about privacy and fairness \cite{iqbal2008designing, rossger2009motivational}. In the case of mixed autonomy, the tolls levied on the users of human-driven vehicles are likely to be greater than those levied against the users of autonomous vehicles, though users of autonomous vehicle may be wealthier. To address this, we investigate \emph{anonymous} tolls, which are identical for all vehicle types on a given road. We provide anonymous tolls with mild performance guarantees and show the strong fundamental limitations on improving performance with anonymous tolls. Because of this fundamental limitation, we provide tolls with only an \emph{infinitesimal} differentiation, which completely eliminate inefficiency due to selfish routing on parallel networks.

The tolls we derive are \emph{fixed tolls} which require global information to compute, including calculating an optimal routing for the given flow demand. Some works have shown that \emph{variable} tolls are needed for tolls to be robust to demand fluctuations or mischaracterizations of road latency functions; a popular toll of this form is variable marginal cost tolling, which guarantee optimality for a wide range of dynamic routing games with a single vehicle type \cite{brown2017studies,sandholm2002evolutionary}. We establish a Price of Anarchy lower bound for networks with variable marginal cost tolls applied and show that this toll structure can result in a large inefficiency in mixed autonomy.

To summarize, our contributions are as follows:
\begin{itemize}
	\item We provide optimal differentiated tolls for general networks with mixed autonomy, with optimality provided via a direct and clear proof,
	\item we extend previous Price of Anarchy results to include networks with more than two vehicle types and affine cost functions,
	\item we provide anonymous tolls with an upper bound on the induced inefficiency and establish lower bounds on the worst-case induced inefficiency with anonymous tolls,
	\item we provide tolls with only an infinitesimal differentiation, which are optimal on parallel networks, and
	\item we provide Price of Anarchy lower bounds for variable marginal cost tolls.
\end{itemize}

\noindent \textbf{Previous Work. } Classic works in congestion games determine optimal tolls for networks with a single vehicle type as far back as Pigou \cite{pigou1932economics}, and canonized in Beckmann \emph{et al.} \cite{beckmann1956studies}. Dafermos generalizes this to the multiclass setting, in which different vehicle types affect congestion differently \cite{dafermos1973toll}. However, this work requires the social cost of flow on each road to be strictly convex, which is generally not the case in the mixed autonomy setting.

In the mixed-autonomy setting with two vehicle types and polynomial latency functions, previous work provides optimal tolls when the asymmetry between the congestion effects of the two vehicle types is constant across all roads in the network \cite{mehr2019pricing}. This work also establishes that when tolls are restricted to be nondifferentiated between vehicle types, tolling may not be able to achieve the socially optimal routing. \cite{lazar2020optimal} drops the assumption of constant asymmetry between vehicle types and provides differentiated tolls for settings with more than two vehicle types, but restricts the class of networks to parallel networks with affine latency functions.

These works are based on the foundation of congestion games, which have long been used to analyze optimal routing \cite{dafermos1969traffic_general}, understand the properties of user (selfish) equilibria \cite{wardrop1900some,depalma1998optimization}, and bound the \emph{Price of Anarchy}, the gap in social cost between optimal routing and the worst-case user equilibrium in a class of games \cite{roughgarden2002bad, correa2008geometric}. Later, this gap was bounded in the setting of mixed autonomy in a work which showed that this gap increases with the ratio between how much the two vehicle types can affect congestion on any road \cite{lazar2020routing}. Similarly, \cite{mehr2019will} describes and bounds the counterintuitively negative effect which can be caused by converting human-driven vehicles to less congesting autonomous vehicles, and \cite{brown2019tragedy} shows a similar effect with autonomous vehicles traveling to park in urban centers.

Some works have studied the robustness of different toll structures to mischaracterization of different parameters \cite{brown2017studies}; the setting of variable marginal cost tolls in evolutionary games has also been studied, which are shown to be optimal in settings with convex latency functions \cite{sandholm2002evolutionary, sandholm2005negative}. This restriction does not describe the setting of mixed autonomy. Our work studies the role of differentiation in tolls for vehicle types which contribute differently to congestion; other works have studied differentiation in tolls for a population with users who are heterogeneous in their value of money as compared to time \cite{brown2016study}.

The effects of mixed autonomy on traffic congestion has been studied in a number of other contexts. This is studied in jointly routing and rebalancing autonomous mobility-on-demand services in the presence of other transportation modes \cite{wollenstein2021routing} as well as autonomous carpooling \cite{amin2021efficient}. Other works study selectively using route recommendations to improve the efficiency of equilibria \cite{zhu2019routing, wu2021value}. Some works consider using autonomous vehicles to ensure the stability of vehicle flow on a road \cite{kreidieh2018dissipating}, or analyze the stability of existing autonomous technology \cite{gunter2020commercially}.

\section{MODEL}
\label{sct:model}

Consider a network described by a graph with $n$ edges, where edges correspond to roads. We consider nonatomic flow, meaning each driver controls an infinitesimally small fraction of vehicle flow. Let $m$ denote the number of vehicle types, where the latency on each edge is a function of the flow of each vehicle type on that edge. We generally use $i$ to index roads and $j$ to index vehicle types. For integer $x$, let ${[}x{]}$ denote $\{1,2, \hdots ,x\}$, and let $\mathbf{1}_n$ denote the $n$-dimensional column vector of ones. Let $\langle v, w \rangle$ denote the inner product between the vectors $v$ and $w$.

Consider inelastic flow demand, with demand described by $\{ s_r, s'_r, \beta_r, j_r \}_{ r \in {[}R{]} }$, where for each \emph{commodity} $r$ in the set of commodities ${[}R{]}$, $s_r$ is that commodity's source node, $s'_r$ is the destination node, $\beta_r$ is the flow demand, and $j_r$ is the vehicle type of the commodity. Thus, each commodity describes the flow demand of a specific vehicle type between a specific source and destination node.

We use $z_i^j$ to describe the flow of vehicle of type $i$ on road $j$. Then, for road $i$, $z_i$ is a column vector describing the flow of all vehicle types on road i: 
$$z_i := \begin{bmatrix}z^1_i & z^2_i & \hdots& z^m_i\end{bmatrix}^T \; .$$ Let $z$ describe the flow over the entire network: $$z =  \begin{bmatrix}z^T_1 & z^T_2 & \hdots& z^T_n\end{bmatrix}^T\; .$$ 

We can alternately describe the flow through a graph in terms of the flow on each route. Let $\mathcal{P}_r$ denote the set of simple paths available to commodity $r$. Let $f^j_p \ge 0$ denote the flow of vehicle type $j$ on path $p$. Then a routing $f$ is feasible if $\sum_{p \in \mathcal{P}_r} f^{j_r}_p = \beta_r$ for commodities $r \in {[}R{]}$. Since there is a one-to-one correspondence between path flows and edge flows, a characterization of feasible routings can be applied to edge routings $z$ as well. Thus, a feasible routing $z$ refers to a routing which satisfies the commodity flow demands and graph constraints. We use $\mathcal{Z}$ to denote the set of feasible routings. We use $\mathcal{I}_p$ to denote the set of edges in path $p$.

As derived in \cite{lazar2019optimal} and \cite{lazar2020optimal} (based on the Bureau of Public Roads delay model in conjunction with a capacity model derived for mixed autonomy \cite{bureau1964manual, lazar2017routing}), we consider multitype mixed autonomous traffic as having an affine latency function, where the scaling effect of each vehicle type on congestion depends on the nominal headway that the vehicle type maintains. Accordingly, each edge $i$ has latency, or cost, function $c_i: \mathbb{R}_{\ge 0}^m \rightarrow \mathbb{R}_{\ge 0}^m$ 
\begin{align*}
	c_i(z_i) = A_iz_i + b_i  \mathbf{1}_n\;
\end{align*}
where $b_i \in \mathbb{R}_{\ge 0}$ denotes the free-flow latency on road $i$ and $A_i \in  \mathbb{R}_{\ge 0}^{m \times m}$ describes the linear increase in latency with flow. We consider all vehicle types to experience the same delay from traversing a road, so we can write $A_i =  \mathbf{1}_n \otimes a_i$, for some $a_i \in \mathbb{R}_{\ge 0}^m$. To ease our later derivations, we define
\begin{align*}
	&A := \begin{bmatrix} A_1 & 0 & \hdots & 0 \\  0 & A_2 & \hdots & 0 \\ \vdots & \vdots & \ddots & \vdots \\ 0 & 0 & \hdots & A_n  \end{bmatrix} \; , \\
	&b := \begin{bmatrix} b_1  \mathbf{1}^T_n & b_2  \mathbf{1}^T_n & \hdots & b_n  \mathbf{1}^T_n  \end{bmatrix}^T \; , \; \text{and} \\
	&c(z) := \begin{bmatrix} c_1(z_1)^T & c_2(z_2)^T & \hdots & c_n(z_n)^T  \end{bmatrix} = Az + b\; ,
\end{align*}
and we use $\mathcal{C}$ to refer to the set of cost functions $c$ which are of this form.

We consider tolling, where vehicle types may experience different tolls. Accordingly, we denote the cost experienced by users of type $j$ on road $i$ under toll $\tau_j$ as $c^{\tau_j}_i \in \mathbb{R}_{\ge 0}^m \rightarrow \mathbb{R}_{\ge 0}^m$, where for some toll $\tau^j_i$,
$$c^{\tau_j}_i(z_i) = c_i(z_i) + \tau^j_i \; .$$

We also define the following quantities:
\begin{align*}	
	&c^\tau_i(z_i) := \begin{bmatrix} c^{\tau_1}_i(z_i)^T & c^{\tau_2}_i(z_i)^T & \hdots & c^{\tau_m}_i(z_i)^T  \end{bmatrix} \; , \\
	&c^\tau(z) := \begin{bmatrix} c^{\tau}_1(z_1)^T & c^{\tau}_2(z_2)^T & \hdots & c^{\tau}_n(z_n)^T  \end{bmatrix} \; , \; \text{and} \\
	& \tau := \begin{bmatrix} \tau^T_1 & \tau^T_2 & \ldots & \tau^T_n  \end{bmatrix}^T \; .
\end{align*}

When discussing the efficiency of a routing, we will judge it by its \emph{social cost}, or aggregate latency, which we denote by $C(z)$, where $C : \in \mathbb{R}_{\ge 0}^{mn} \rightarrow \mathbb{R}_{\ge 0}$:
\begin{equation}\label{eq:objective}
	C(z) := \sum_{i \in {[}n{]}} \langle c_i(z_i),z_i \rangle = \langle c(z), z \rangle \; .
\end{equation}

Note that tolls do not deduct from the social cost, as we consider the tolls to be circulated back into public funds.

We are concerned with the social cost of user equilibria in this setting. This means routings for which no user type would wish to change their routing, where users experience the cost of a route as a sum of the latency and the toll for their user type. Specifically, we consider Wardrop Equilibrium, which can be specified in terms of path routing.
\begin{definition}
	A flow is at \emph{Wardrop Equilibrium} if no user can decrease their cost by switching routes. Mathematically, a flow $f$ is at Wardrop Equilibrium relative to toll $\tau$ if
	\begin{align*}
		\forall r \in {[}R{]}, \; \forall p, p' \in \mathcal{P}_r, &\; f^{j_r}_p > 0  \implies \\ 
		&\sum_{i \in \mathcal{I}_p}c^{\tau_{j_r}}_i(z_i) \le \sum_{i \in \mathcal{I}_{p'}}c^{\tau_{j_r}}_i(z_i) \; .
	\end{align*}
\end{definition}
In words, for each commodity, if a path has positive flow, then no other path available to that commodity can have lower cost, where the cost is relative to the user type of the commodity.

Another useful way of characterizing equilibria is using the \emph{Variational Inequality} \cite{smith1979existence, depalma1998optimization}. Applied to our setting, if feasible routing $z$ is a Wardrop Equilibrium, then for any other feasible routing $z' \in \mathcal{Z}$, $$\langle c^\tau(z), z-z' \rangle \le 0 \; .$$

In contrast with equilibrium routing, we define a socially optimal routing $\sz$ as a routing in the set of feasible routing $\mathcal{Z}$ which minimizes the cost function $C(z)$. We define the set of minimizers of $C(z)$ by $\mathcal{Z}^*$

\section{DIFFERENTIATED TOLLS}
\label{sct:differentiated.tex}
We are now prepared to present optimal tolls which completely eliminate inefficiency in routing in our setting. Specifically, when these differentiated tolls are applied, the social cost of any routing at equilibrium will equal the optimal social cost of any feasible routing.

\begin{theorem}\label{thm:differentiated}
	Let $z^*$ denote a socially optimal routing for a network with social cost $C$. Apply tolls $\tau_i = A_i^T z_i^*$. Any resulting Wardrop Equilibrium $z'$ will have optimal social cost, meaning $$C(z') = C(z^*) \; .$$
\end{theorem}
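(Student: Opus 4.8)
The plan is to prove this directly from the Variational Inequality characterization of Wardrop equilibrium plus a single algebraic identity; notably, no convexity of $C$ is needed, which is the point — in mixed autonomy the social cost need not be convex.

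First I would observe that $\tau_i = A_i^T z_i^*$ is a legitimate per-road, per-type toll: because $A$ is block diagonal, the $i$-th block of the stacked vector $\tau = A^T z^*$ depends only on the optimal flow $z_i^*$ on road $i$, and since $A_i \ge 0$ and $z_i^* \ge 0$ entrywise the toll is nonnegative. (This is exactly the fixed ``marginal cost'' toll, differentiated across types: $\partial C/\partial z_i^j$ evaluated at $z^*$ equals $[c_i(z_i^*)]_j + [A_i^T z_i^*]_j$, and $\tau$ supplies the second term.)

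The key step is the identity: for every feasible routing $z' \in \mathcal{Z}$,
$$C(z') - C(z^*) = \langle c^\tau(z'),\, z' - z^* \rangle .$$
To get this, write $C(z) = \langle c(z), z\rangle = \langle Az + b, z\rangle$, then add and subtract $\langle Az'+b, z^*\rangle$ inside $C(z') - C(z^*)$; the first piece becomes $\langle Az'+b,\, z'-z^*\rangle$ and the second becomes $\langle A(z'-z^*),\, z^*\rangle = \langle A^T z^*,\, z'-z^*\rangle$, so the two combine to $\langle Az' + b + A^T z^*,\, z'-z^*\rangle = \langle c(z') + \tau,\, z'-z^*\rangle = \langle c^\tau(z'),\, z'-z^*\rangle$. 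This is precisely why $\tau = A^T z^*$ is chosen: it cancels the ``missing half'' of the quadratic so that the gap in social cost is exactly an inner product with the tolled cost.

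Finally I would combine two one-sided inequalities. Since $z'$ is a Wardrop Equilibrium relative to $\tau$ and $z^* \in \mathcal{Z}$, the Variational Inequality applied with the alternative routing $z^*$ gives $\langle c^\tau(z'),\, z' - z^*\rangle \le 0$, hence $C(z') \le C(z^*)$ by the identity; and since $z^*$ minimizes $C$ over $\mathcal{Z}$ while $z' \in \mathcal{Z}$, also $C(z') \ge C(z^*)$. Therefore $C(z') = C(z^*)$. I do not expect a genuine technical obstacle; the only subtlety is recognizing that the tolled-cost VI inequality and the global optimality of $z^*$ are the two halves of an equality, so that no second-order or convexity argument about $C$ is ever used — and keeping the block structure of $A$ straight so that $\tau = A^T z^*$ genuinely decouples across roads.
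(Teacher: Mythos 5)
Your proof is correct and is essentially the paper's own argument: both hinge on applying the Variational Inequality to the tolled cost and using the affine structure together with $\tau = A^T z^*$ to cancel the cross term $\langle A(z'-z^*), z^*\rangle$, then closing with $C(z') \ge C(z^*)$ from optimality. Your identity $C(z') - C(z^*) = \langle c^\tau(z'),\, z' - z^*\rangle$ is just a tidier repackaging of the paper's chain of equalities.
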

\begin{proof}
	Let $\hz$ denote a routing at Wardrop Equilibrium, and let $\sz$ denote a socially optimal routing.
	\begin{align*}
		C(\hz) &= \langle c(\hz), \hz \rangle = \langle c^\tau(\hz), \hz \rangle - \tau^T\hz \\
		&\le \langle c^\tau(\hz), \sz \rangle - \tau^T\hz \\
		&= \langle c(\hz),\sz \rangle - \tau^T(\hz - \sz) \\
		&= \langle c(\sz), \sz \rangle + \langle c(\hz)-c(\sz),\sz \rangle - \tau^T(\hz - \sz) \\
		&= C(\sz) + \sz^T(A\hz + b - A\sz - b) - \tau^T(\hz - \sz) \\
		&= C(\sz) + \sz^T A (\hz - \sz) - \tau^T ( \hz - \sz )  \\
		&= C(\sz) + ( \sz^TA - \tau^T )(\hz - \sz) \\
		&= C(\sz)
	\end{align*}
	The inequality on the second line results from the Variational Inequality and the final equality stems from the structure of the tolls. By the definition of optimal routing, $C(\hz) \ge C(\sz)$, so we find that $C(\hz) = C(\sz)$.
\end{proof}

This proof is straightforward, and results from a novel application of Variational Inequality, similar to that of \cite{correa2008geometric}, but applied to the cost function under the application of tolls. Thus we have optimal tolls for multitype mixed autonomy in general networks with affine latency functions. As mentioned in the theorem, the tolls depend on an optimal flow pattern, which must be found to calculate the tolls. 

\begin{remark}
These optimal tolls are the same as the edge tolls in \cite{dafermos1973toll}, though in that work the proof of optimality is limited to when the social cost is a convex function, which in our setting it is not. The optimality of these same tolls has been proven in the mixed autonomy setting when there are two vehicle types \cite{mehr2019pricing}, but it is restricted to the case in which the asymmetry in the congestion effects of the vehicle types is constant across roads, though they prove this for polynomial latency functions. Since they consider only two vehicle types, this assumption means that $a^1_i/a^2_i = k$ for some constant $k$ for all roads $i$ in the set of roads ${[}n{]}$. Our theoretical results greatly expands their results by extending it to multiple types beyond two vehicle types and removing this assumption about the congestion effects of the vehicle types.
\end{remark}

\begin{remark}
Previous results prove the optimality of the path tolls in \cite{dafermos1973toll} in the setting of mutitype mixed autonomy without restrictions on the congestion effects, but limited to parallel networks \cite{lazar2020optimal}. By removing this restrictive assumption, our current results greatly expand the literature on tolling for mixed autonomous vehicle flow to general networks with multiple source-destination pairs.
\end{remark}

\section{ANONYMOUS TOLLS}
\label{sct:anonymous}
\begin{figure*}
	\centering
	\begin{subfigure}[b]{0.5\textwidth}
		\begin{center}
			\centering
			\includegraphics[width=1.0\linewidth]{./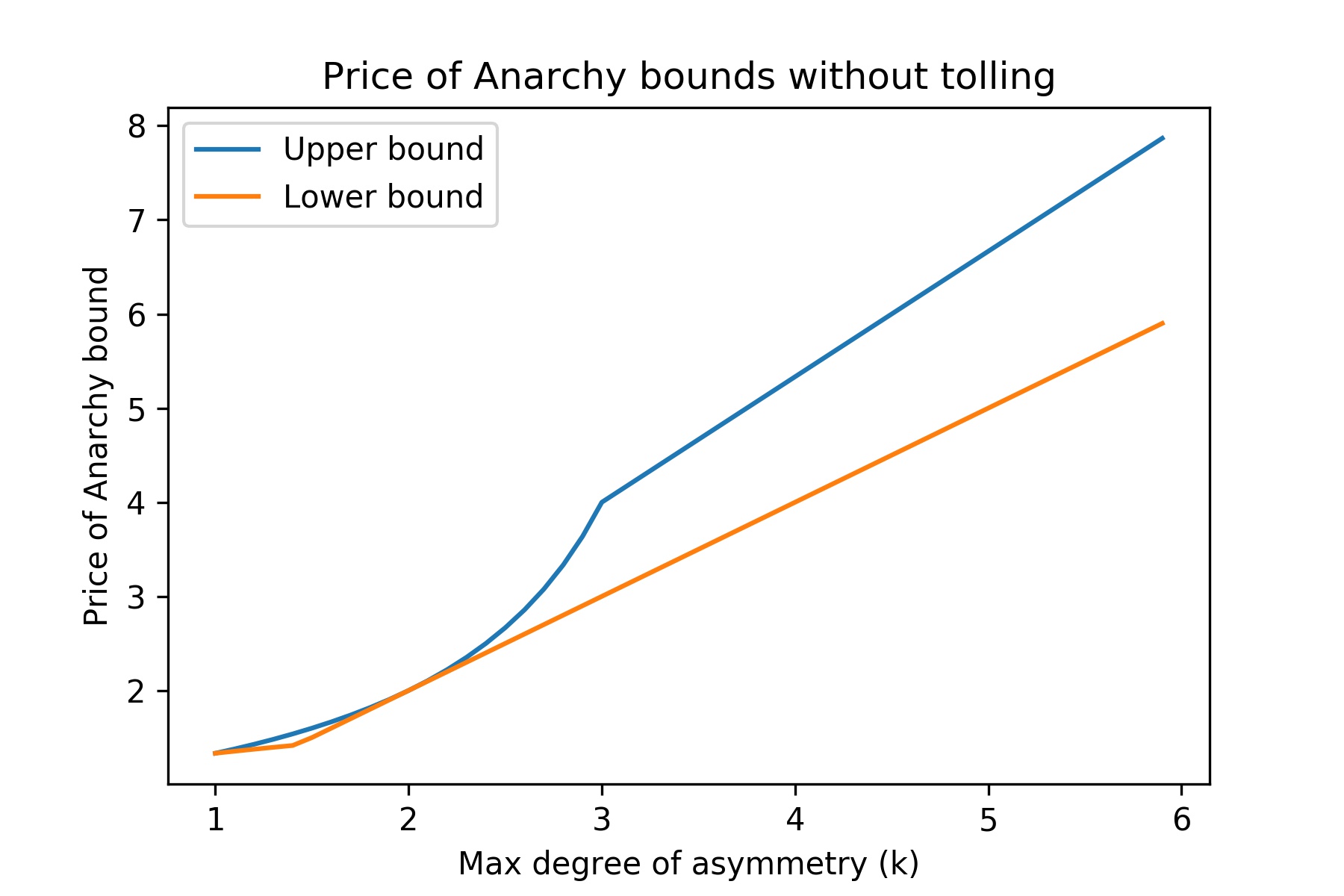}	
			\label{fig:bounds}
		\end{center}
	\end{subfigure}\begin{subfigure}[b]{0.5\textwidth}
		\begin{center}
			\centering
			\includegraphics[width=1.0\linewidth]{./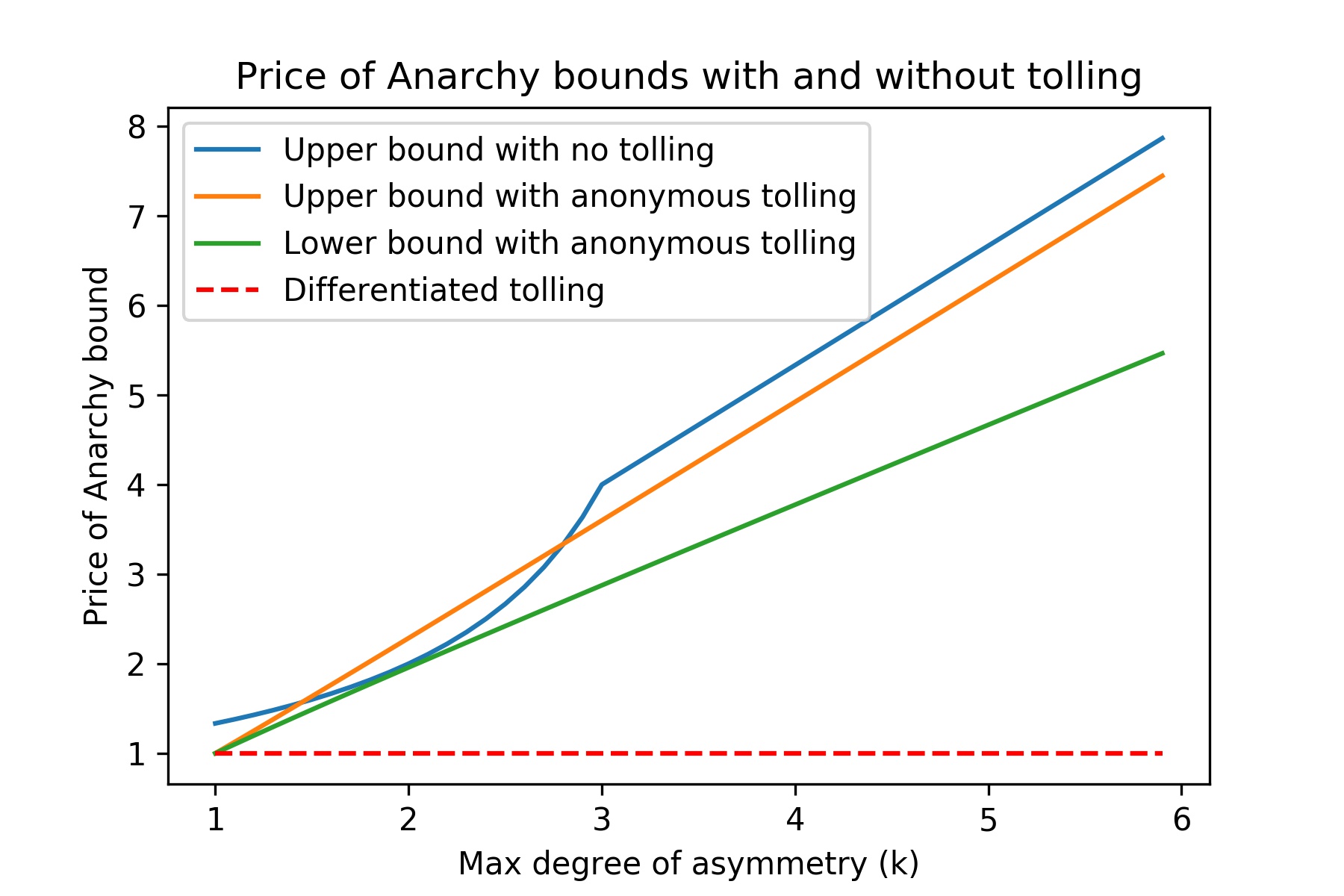}	
			\label{fig:anonymous_tolls}	
		\end{center}
	\end{subfigure}
	\caption{(a) Bounds on the Price of Anarchy in Theorem~\ref{thm:poa}, compared against PoA lower bounds from \cite{lazar2020routing}. (b) Bounds on inefficiency with anonymous tolls, compared with untolled PoA upper bound and tolled PoA lower bound. The PoA with the proposed differentiated tolling is $1$.}
	\label{fig:bounds}
\end{figure*}

As mentioned in the introduction, implementing differentiated tolls may be logistically or politically prohibitive. Accordingly, in this section we present anonymous tolls which improve the bound on worst-case equilibrium behavior. In order to contextualize these results, we first present an extension of previous results which bound worst-case equilibrium behavior in the mixed autonomy setting without tolling. This will contextualize the results on anonymous tolling results to follow.

\subsection{Price of Anarchy Bounds}
The Price of Anarchy (PoA) is a metric which measures, for a class of congestion games and cost functions, the maximum ratio between the social cost under the worst-case equilibrium to the socially optimal cost. This gives a worst-case bound on how much worse-off the social cost can be when users act selfishly. The PoA can be established with or without tolls -- for example, tolls that bring the equilibrium social cost equal to the socially optimal cost, such as the differentiated tolls in the previous section, have a PoA of $1$.

Subsequent examples will show that the class of anonymous tolls we consider cannot achieve a PoA of $1$. In order to understand the PoA reduction we \emph{can} achieve with anonymous tolls, we first introduce an extension of previously established PoA bounds.

Previous results have established the PoA for mixed autonomy when there are only two vehicle types, including for polynomial latency functions \cite{lazar2020routing}. In the current work we are interested in affine latency functions, but for more than two vehicle types. Accordingly, we extend the affine case to our setting.

\begin{definition}
	The \emph{maximum degree of asymmetry}, denoted $k$, is the maximum ratio of congestion effects due to two vehicle types. In our setting,
	$$k:= \max_{i \in {[}n{]}, j,j' \in {[}m{]}} a^j_i/a^{j'}_i \; .$$
\end{definition}

Note that by definition, $k \ge 1$. We now present a theorem which extends the affine case of Theorems~1 and 2 in \cite{lazar2020routing}.
\begin{theorem}\label{thm:poa}
	For any feasible equilibrium routing $\hz$ and optimal routing $\sz$ in an untolled network with maximum degree of asymmetry $k$,
	$$C(\hz) \le \Lambda(k) C(\sz) \; ,$$
	where
	$$ \Lambda(k) = \begin{cases} 4/(4-k) & k \le 3 \\ 4k/3 & k > 3 \end{cases} \; . $$
\end{theorem}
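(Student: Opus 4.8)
The plan is to use the standard $(\lambda,\mu)$-smoothness / variational-inequality argument for congestion games, adapted to the multitype affine setting. The starting point is the variational inequality characterizing the untolled equilibrium $\hz$: for any feasible $\sz$, $\langle c(\hz), \hz - \sz\rangle \le 0$, hence $C(\hz) = \langle c(\hz),\hz\rangle \le \langle c(\hz),\sz\rangle$. The goal is then to bound $\langle c(\hz),\sz\rangle$ above by a combination of $C(\sz)$ and $C(\hz)$ and solve for the ratio. Working edge by edge, recall $c_i(z_i) = \mathbf{1}_n\langle a_i, z_i\rangle + b_i\mathbf{1}_n$, so that $\langle c_i(\hz_i),\sz_i\rangle = \langle a_i,\hz_i\rangle\langle\mathbf{1}_n,\sz_i\rangle + b_i\langle\mathbf{1}_n,\sz_i\rangle$. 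Writing $x_i := \langle\mathbf{1}_n,\hz_i\rangle$ and $y_i := \langle\mathbf{1}_n,\sz_i\rangle$ for the total flows on edge $i$, I would show that the pure-congestion term $\langle a_i,\hz_i\rangle\,y_i$ is sandwiched: it is at least $(1/k)\langle a_i,\hz_i\rangle x_i$-type quantities and at most $k$ times the ``aligned'' products, because every ratio $a^j_i/a^{j'}_i$ lies in $[1/k,k]$. Concretely $\langle a_i,\hz_i\rangle \le k\,(\min_j a^j_i)\,x_i$ and similarly for $\sz_i$, which lets me replace the bilinear form $\langle a_i,\hz_i\rangle\langle\mathbf 1_n,\sz_i\rangle$ by a scalar affine congestion game with a multiplicative distortion of at most $k$.

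Having reduced to scalars, the next step is the one-dimensional inequality: for the affine function $t\mapsto \alpha t + \beta$ with $\alpha,\beta\ge 0$, and any $x,y\ge 0$, one has $(\alpha x+\beta)y \le \tfrac{1}{4}(\alpha x + \beta)x + (\alpha y + \beta)y$ after appropriately inserting the distortion factor $k$. I would carry this out by the elementary bound $\alpha x y \le \tfrac{k}{4}\alpha x^2 + \tfrac{1}{k}\alpha y^2$ (a weighted AM–GM / completing-the-square step, valid for any split), and $\beta y \le \tfrac{1}{4}\beta x + \beta y$ trivially; summing over $i$ and over vehicle types gives $\langle c(\hz),\sz\rangle \le \tfrac{k}{4}C(\hz) + C(\sz)$ when $k\le 4$, and more generally a bound of the form $\mu\, C(\hz) + \lambda\, C(\sz)$ with $\mu = k/4$. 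Combining with $C(\hz)\le\langle c(\hz),\sz\rangle$ yields $C(\hz)(1-\mu)\le \lambda C(\sz)$, i.e. $C(\hz)\le \tfrac{\lambda}{1-\mu}C(\sz) = \tfrac{4}{4-k}C(\sz)$, which is exactly $\Lambda(k)$ in the regime $k\le 3$ (and indeed up to $k<4$, but the theorem only claims it for $k\le 3$). For $k>3$ the factor $1-\mu$ becomes too small or negative, so I would instead optimize the split in the AM–GM step differently — choosing the coefficient to balance the two regimes — to obtain the cruder bound $\Lambda(k) = 4k/3$; alternatively, for $k>3$ one can bound $C(\hz)$ against the ``all flow treated as the least-congesting type'' surrogate and lose a further factor, landing on $4k/3$.

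I expect the main obstacle to be the case analysis at $k=3$ and making the $k>3$ branch yield precisely $4k/3$ rather than some other constant: the clean smoothness argument degrades once $\mu = k/4 \ge 3/4$, so the bound $4/(4-k)$ either blows up or must be replaced, and getting the replacement constant to be exactly $4k/3$ requires choosing the free parameter in the completing-the-square step to optimize the worst case over all edge flow configurations consistent with degree of asymmetry $k$. I would handle this by leaving the AM–GM weight as a parameter $\gamma$, deriving $C(\hz) \le \inf_{\gamma} \bigl[\lambda(\gamma,k)/(1-\mu(\gamma,k))\bigr] C(\sz)$, and then verifying that this infimum equals $4/(4-k)$ for $k\le 3$ and $4k/3$ for $k>3$ by a direct two-piece computation. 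The remaining steps — the variational inequality, the edge-wise distortion bound, and the summation — are routine and mirror \cite{correa2008geometric} and the affine case of \cite{lazar2020routing}.
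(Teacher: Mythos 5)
Your first branch is exactly the paper's argument: Lemma~\ref{lma:poa1} runs the variational-inequality/smoothness computation, bounds the relevant supremum $\beta(\mathcal{C}_k)$ by $k/4$, and concludes $C(\hz)\le \tfrac{4}{4-k}C(\sz)$ for $k<4$. Up to there your proposal is sound and matches the paper.

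The $k>3$ branch is where the gap is: you offer two candidate mechanisms, commit to neither, and neither as described delivers the stated bound. (i) Re-optimizing the AM--GM weight inside the same smoothness framework does not produce $4k/3$. Writing $u_i,v_i$ for the total equilibrium and optimal flows on edge $i$, the edge-wise requirement reduces, after using $\langle a_i,\sz_i\rangle \ge \langle a_i,\hz_i\rangle v_i/(k u_i)$, to $k u_i v_i \le \lambda v_i^2 + \mu k u_i^2$, i.e.\ $\lambda\mu\ge k/4$; minimizing $\lambda/(1-\mu)$ subject to this and $\lambda\ge1$ (forced by the free-flow terms) gives the value $k$ at $(\lambda,\mu)=(k/2,1/2)$ for $k\ge2$, not $4k/3$. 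That happens to be an even stronger conclusion, so an honest completion of this computation would still prove the theorem, but your stated plan of ``verifying that the infimum equals $4/(4-k)$ for $k\le3$ and $4k/3$ for $k>3$'' is a verification that would fail, and you explicitly flag this step as unresolved. (ii) Your alternative --- compare against an ``all flow treated as the least-congesting type'' surrogate and lose a factor $k$ --- is the right spirit but omits the construction that makes it work, which is the actual content of the paper's proof. Proposition~\ref{prop:agg} builds a single-class surrogate latency $\ell_i(\cdot\,;\hz_i,c_i)$ by stacking the equilibrium flows type by type on each edge, chosen so that simultaneously the aggregated flow $\hf(\hz)$ is a Wardrop equilibrium of the surrogate game \emph{and} the surrogate social cost at that flow equals $C(\hz)$. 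Only then can a $4/3$ bound be applied (Lemma~\ref{lma:agg_poa}, which needs its own argument since $\ell_i$ is only piecewise affine), after which Lemma~\ref{lma:agg_opt_cost} shows the surrogate optimum is at most $k\,C(\sz)$ by comparison with the least-congesting-type flow. If you simply replace each edge's cost by its least-congesting-type version, $\hz$ is no longer an equilibrium for the new costs and the $4/3$ step has nothing to attach to. So the $k>3$ branch needs either the paper's aggregation construction or a completed parameterized-smoothness computation; as written it is asserted rather than proved.
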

We defer the proof to the appendix.

The bounds above are not proven to be tight in all cases, but \cite{lazar2020routing} provides two examples to bound from below worst-case PoA, one of which reaches a PoA of $k$, and one of which reaches $1+k/(2 \sqrt{k} + 1)$. Figure~\ref{fig:bounds}~(a) compares these lower bounds against the upper bounds from Theorem~\ref{thm:poa}.

\begin{figure*}[h!]
	\centering
	\begin{subfigure}[b]{0.5\textwidth}
		\begin{center}
			\begin{tikzpicture}[->, >=stealth', auto, semithick, node distance=5cm]
			\tikzstyle{every state}=[fill=white,draw=black,thick,text=black,scale=1]
			\node[state]    (0)               {$s$};
			\node[state]    (1)[right of=0]   {$t$};
			\path
			(0) edge[bend left]		node[above]{$c_1(z^1_1,z^2_1) = kz^1_1+ z^2_1$}     (1)
			(0) edge[bend right]		node[below]{$c_2(z^1_2,z^2_2) = z^1_2 + kz^2_2$}     (1);
			\end{tikzpicture}
		\end{center}
	\end{subfigure}\begin{subfigure}[b]{0.5\textwidth}
		\begin{center}
			\begin{tikzpicture}[->, >=stealth', auto, semithick, node distance=5cm]
			\tikzstyle{every state}=[fill=white,draw=black,thick,text=black,scale=1]
			\node[state]    (0)               {$s$};
			\node[state]    (1)[right of=0]   {$t$};
			\path
			(0) edge[bend left]		node[above]{$c_1(z^1_1,z^2_1) = 1$}     (1)
			(0) edge[bend right]	node[below]{$c_2(z^1_2,z^2_2) = \frac{k}{\sqrt{k}+1}z^1_2+\frac{1}{\sqrt{k} + 1}z^2_2$}     (1);
			\end{tikzpicture}
		\end{center}
	\end{subfigure}
	\caption{Examples for anonymous tolling. (a) The proposed anonymous tolling does not improve the PoA from the untolled PoA in this network, with demand of $1$ for each vehicle type. (b) Anonymous tolling improves the PoA in this network, with demand of $1/\sqrt{k}$ of type $1$ and demand of $1$ of type $2$. }
	\label{fig:example_networks}
\end{figure*}
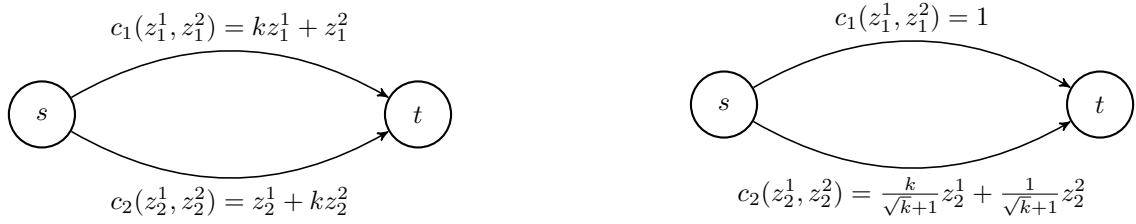

\subsection{Anonymous Tolling}
In this section we establish anonymous tolls which, when applied, improve the previously established PoA bounds; we defer the proof of our theoretical results to the appendix. We will then provide lower bounds on the worst-case PoA with anonymous tolling via examples.

\begin{theorem}\label{thm:anonymous}
	Find an optimal routing $z^*$. Then on each road $i$, levy the following identical toll for all vehicle types: $$\tau^j_i = \min_{j' \in {[}m{]}}a^{j'}_i \sum_{j \in {[}m{]}}z^{*j}_i \; .$$Then, for any Wardrop Equilibrium $\hz$, 
	\begin{equation*}
		C(\hz) \le \frac{4k}{3k+1}kC(\sz) \; .
	\end{equation*}
\end{theorem}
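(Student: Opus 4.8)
The plan is to follow the same Variational Inequality template used in the proof of Theorem~\ref{thm:differentiated}, but now account for the fact that the anonymous toll $\tau^j_i = \tau_i := \min_{j'} a^{j'}_i \sum_j z^{*j}_i$ is only an approximation of the optimal differentiated toll $A_i^T z_i^*$, whose $j$th entry is $a^j_i \sum_j z^{*j}_i$. First I would introduce the ``ideal'' differentiated toll $\hat\tau$ with $\hat\tau^j_i = a^j_i\sum_j z^{*j}_i$ and note that componentwise $0 \le \hat\tau^j_i - \tau_i \le (a^j_i - \min_{j'}a^{j'}_i)\sum_j z^{*j}_i$, a gap controlled by the degree of asymmetry $k$. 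Let $\hz$ be a Wardrop Equilibrium under the anonymous toll $\tau$ and $\sz$ the optimal routing. Running the Theorem~\ref{thm:differentiated} computation but keeping the error term, I would get
\begin{align*}
C(\hz) &= \langle c^\tau(\hz),\hz\rangle - \textstyle\sum_i \tau_i\langle\mathbf{1}_m,\hz_i\rangle \\
&\le \langle c^\tau(\hz),\sz\rangle - \textstyle\sum_i \tau_i\langle\mathbf{1}_m,\hz_i\rangle \\
&= C(\sz) + \langle A\sz,\hz-\sz\rangle - \textstyle\sum_i \tau_i\langle\mathbf{1}_m,\hz_i - \sz_i\rangle,
\end{align*}
where the inequality is the Variational Inequality for $\hz$ and the last line repeats the algebra in the earlier proof. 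Writing $\langle A\sz,\hz-\sz\rangle = \sum_i \langle\hat\tau_i,\hz_i-\sz_i\rangle$ (since $A_i^T\sz_i = \hat\tau_i$ up to the $\mathbf 1_m\otimes$ structure), the bracketed part becomes $\sum_i\langle\hat\tau_i - \tau_i\mathbf 1_m,\hz_i-\sz_i\rangle$, so
\[
C(\hz) \le C(\sz) + \sum_{i}\big\langle \hat\tau_i - \tau_i\mathbf 1_m,\ \hz_i-\sz_i\big\rangle .
\]
Since $\hat\tau_i - \tau_i\mathbf 1_m \ge 0$ componentwise and $\hz_i \ge 0$, the $\hz_i$ contribution is nonnegative but bounded, while the $-\sz_i$ contribution is nonpositive; the task is to bound the whole error term by $(\tfrac{4k}{3k+1}k - 1)C(\sz)$.

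The key estimates I would then assemble are: (i) $\hat\tau^j_i - \tau_i \le (k-1)\min_{j'}a^{j'}_i\sum_j z^{*j}_i$, using $a^j_i \le k\min_{j'}a^{j'}_i$; (ii) a lower bound $C(\sz) \ge \sum_i \min_{j'}a^{j'}_i\big(\sum_j z^{*j}_i\big)^2$, obtained by dropping the free-flow term $b$ and bounding $\langle a_i,\sz_i\rangle\langle\mathbf 1_m,\sz_i\rangle \ge \min_{j'}a^{j'}_i(\sum_j z^{*j}_i)^2$; and (iii) a way to dominate $\sum_i \min_{j'}a^{j'}_i(\sum_j z^{*j}_i)(\sum_j \hz^j_i)$ in terms of $C(\hz)$ and $C(\sz)$. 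For (iii) I would apply the Variational Inequality or direct expansion together with a Cauchy–Schwarz / AM–GM split of the form $xy \le \tfrac{\lambda}{2}x^2 + \tfrac{1}{2\lambda}y^2$ on each road, tuning $\lambda$ at the end; this is exactly the standard $(1 - 1/\lambda)$-type argument behind Theorem~\ref{thm:poa}. Combining (i)–(iii) gives an inequality of the shape $C(\hz) \le C(\sz) + (k-1)\big(\theta C(\hz) + \psi(\lambda)C(\sz)\big)$ for suitable constants, which rearranges to $C(\hz) \le \frac{1 + (k-1)\psi(\lambda)}{1-(k-1)\theta}C(\sz)$, and optimizing over $\lambda$ should produce the stated factor $\frac{4k}{3k+1}k$.

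The main obstacle is step (iii): the error term couples the equilibrium flow $\hz$, which we do not control directly, with the optimal flow $\sz$, so a crude componentwise bound will not close. The resolution is to feed the equilibrium variational inequality back in — i.e. use $\langle c^\tau(\hz),\hz-\sz\rangle \le 0$ a second time to trade a term in $\|\hz\|$ against $C(\sz)$ and the free-flow latencies — and to be careful that the quadratic-in-$\hz$ coefficient that appears on the right, after the AM–GM split, is strictly less than $1$ so the rearrangement is valid; this is what forces the particular constant and is where the factor $\tfrac{4k}{3k+1}$ (rather than something looser) comes from. I would also double-check the edge case $m=1$ (then $\tau_i = \hat\tau_i$, the error term vanishes, and the bound must collapse to $C(\hz)=C(\sz)$, consistent with $k=1$ giving $\tfrac{4k}{3k+1}k = 1$), and confirm monotonicity of the bound in $k$ so it dominates the untolled bound $\Lambda(k)$ from Theorem~\ref{thm:poa} exactly where the theorem claims an improvement.
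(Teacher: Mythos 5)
Your approach is genuinely different from the paper's. The paper does not perturb the Theorem~\ref{thm:differentiated} argument at all: it aggregates the $m$ types on each road into a single commodity with a piecewise-affine latency $\ell_i(\cdot;\hz_i,c_i)$ built from the equilibrium (Proposition~\ref{prop:agg}), shows the aggregate flow is still a Wardrop equilibrium for $\ell$, proves a smoothness-type bound $\gamma(\mathcal{C}_k)\le (k-1)/(4k)$ for the tolled aggregate game (this is where the factor $4k/(3k+1)$ comes from), and then pays the extra factor $k$ via Lemma~\ref{lma:agg_opt_cost}, which compares the aggregate optimum to the true optimum. Your route --- writing $C(\hz)\le C(\sz)+\sum_i\langle\hat\tau_i-\tau_i\mathbf 1_m,\hz_i-\sz_i\rangle$ and controlling the toll-mismatch term --- is a legitimate alternative, and your opening variational-inequality display is correct.

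The gap is in closing the error term. With your estimate (i), $\hat\tau^j_i-\tau_i\le(k-1)\min_{j'}a^{j'}_i\sum_{j''} z^{*j''}_i$, the AM--GM split gives $C(\hz)\le C(\sz)+(k-1)\bigl(\tfrac{\lambda}{2} C(\hz)+\tfrac{1}{2\lambda}C(\sz)\bigr)$, and optimizing over $\lambda$ yields roughly $2.62$ at $k=2$, which is \emph{worse} than the claimed $16/7\approx 2.29$; so the assertion that ``optimizing over $\lambda$ should produce the stated factor'' fails as written, and the second application of the variational inequality you gesture at is not what repairs it. What does repair it is a sharper per-road estimate: bound the gap against $a^j_i$ itself rather than against the minimum, i.e.\ $\sum_j(a^j_i-\min_{j'}a^{j'}_i)\hz^j_i\le\tfrac{k-1}{k}\langle a_i,\hz_i\rangle$. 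Setting $X_i=\sum_jz^{*j}_i$, $Y_i=\sum_j\hz^j_i$, $\alpha_i=\langle a_i,\hz_i\rangle$, $\beta_i=\langle a_i,z^*_i\rangle$, one then has $X_i\alpha_i\le\tfrac12(\alpha_iY_i+\alpha_iX_i^2/Y_i)$ and $\alpha_iX_i^2/Y_i\le a^{\max}_iX_i^2\le k\beta_iX_i$, whence $C(\hz)\le C(\sz)+\tfrac{k-1}{2k}C(\hz)+\tfrac{k-1}{2}C(\sz)$, i.e.\ $C(\hz)\le kC(\sz)$. If that computation survives scrutiny it is in fact \emph{stronger} than the stated bound $\tfrac{4k}{3k+1}kC(\sz)$ for all $k>1$ and matches the lower bound of Proposition~\ref{prop:anonymous_tolling}, so it is worth verifying carefully; but as submitted, your sketch does not establish the theorem's constant.
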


This anonymous toll lowers the upper bound on PoA from that of the class of congestion games without tolling for $k < 1.45$ and $k > 2.8$. However, the PoA bound, while tight for maximum degree of asymmetry $k=1$ and $k=2$, is not tight for all $k$, as shown in Fig.~\ref{fig:bounds}~(a). Therefore, lowering the upper bound is not guaranteed to improve worst-case performance. Nevertheless, the bound with anonymous tolling is lower than without tolling and the subsequent examples show that this tolling can improve the PoA in certain cases.

We next investigate the tightness of this bound. The first example will prove the following proposition, which presents a subclass of anonymous tolls with a specific structure which makes it easier to compute. Specifically, we consider tolling schemes in which the toll on a road is a function of the vehicle flows of each type on that road, the optimal vehicle flows, and the network parameters, but treating these vehicle types interchangeably. We formulate this mathematically in the following proposition.
\begin{proposition}\label{prop:anonymous_tolling}
	Consider the class of tolls in which the anonymous tolls on each road $i$ is a function of the form: $$\tau_i(\{(z^j_i,z^{*j}_i,a^j_i,b_i)\}_{ j \in {[}m{]}}) \; . $$
	The Price of Anarchy with these tolls applied is at least $k$.
\end{proposition}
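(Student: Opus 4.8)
The plan is to exhibit a single network—the two-link parallel network in Figure~\ref{fig:example_networks}(a)—and show that for *any* toll in the stated class, the induced equilibrium has social cost at least $k$ times the optimum. The key observation is that the class of tolls described is symmetric under relabeling the two vehicle types, and the example network is itself symmetric under simultaneously swapping the two links and the two vehicle types: link $1$ has cost $kz^1_1 + z^2_1$ and link $2$ has cost $z^1_2 + kz^2_2$, with one unit of demand of each type. So I would first pin down the socially optimal routing: by symmetry and convexity of the social cost on this network, the optimum routes all of type $1$ on link $2$ and all of type $2$ on link $1$ (each vehicle travels on the link where it has the small coefficient), giving each link a latency of $1$ and a social cost of $C(\sz) = 1 + 1 = 2$.

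Next I would analyze what the symmetric toll does. Write $z^{*1} = (0,1)$ and $z^{*2}=(1,0)$ for the optimal flows on the two links. On link $1$ the toll is a function of the multiset $\{(z^1_1, 0, k, 0), (z^2_1, 1, 1, 0)\}$ and on link $2$ it is a function of $\{(z^1_2, 1, 1, 0), (z^2_2, 0, k, 0)\}$. I would then restrict attention to the candidate equilibrium in which both vehicle types split evenly, i.e. each type puts flow $1/2$ on each link; then each link carries $z^j_i = 1/2$ of each type, the link latencies are both $(k+1)/2$, and the social cost is $C(\hz) = 2\cdot(k+1)/2\cdot 1 = k+1$. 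Wait—I need $k+1 \ge 2k$, which fails for $k>1$; so the bad equilibrium cannot be the symmetric split. Instead I would push the analysis toward the *reversed* configuration: type $1$ all on link $1$ and type $2$ all on link $2$, which gives each link latency $k$ and social cost $2k$, i.e. ratio $k$. The substance of the argument is to show a symmetric toll cannot rule this configuration out as an equilibrium. Because the toll is a symmetric function of the type data, and in the reversed configuration the situation of (type $1$, link $1$) is exactly the mirror image of (type $2$, link $2$)—both carry the "wrong" type at flow $1$ against optimal flow $0$—the toll assigns link $1$ and link $2$ the same value $\tau^\star$. Then for a type-$1$ user, the cost of link $1$ is $k + \tau^\star$ and the cost of link $2$ is $0 + \tau^\star$ (since in this configuration $z^1_2 = 0$, $z^2_2 = 1$, so link $2$'s latency for type $1$ is $z^1_2 + k z^2_2 = k$). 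Hmm, that is also $k+\tau^\star$. So both links cost the same for every user, making the reversed configuration a valid Wardrop equilibrium regardless of $\tau^\star$, and its cost ratio is $2k/2 = k$, proving the bound.

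The one subtlety I would be careful about is arguing that the toll values on the two links are genuinely forced to be equal. This follows because the proposition stipulates that $\tau_i$ depends only on the \emph{collection} $\{(z^j_i, z^{*j}_i, a^j_i, b_i)\}_{j\in[m]}$ treated symmetrically in $j$; in the reversed configuration the collection for link $1$ is $\{(1,0,k,0),(0,1,1,0)\}$ and for link $2$ is $\{(0,1,1,0),(1,0,k,0)\}$, which are the same multiset, so any symmetric function returns the same number—call it $\tau^\star$. I expect this bookkeeping step, together with cleanly verifying that the optimum on this network is indeed $2$ and not something smaller, to be the only real work; the equilibrium condition then holds automatically because symmetry makes every link equally attractive to every type. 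Finally I would remark that since the bound $k$ holds on this one network for every toll in the class, the Price of Anarchy over the whole class of networks and these tolls is at least $k$, which is what the proposition claims.
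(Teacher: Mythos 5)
Your proposal is correct and takes essentially the same route as the paper's: the paper also proves the proposition via the two-link network of Figure~\ref{fig:example_networks}(a), noting that the two roads present identical (multiset) inputs to any toll function of the stated form, so the tolls must coincide and the reversed routing of cost $2k$ survives as a Wardrop equilibrium against the optimum of $2$. Your write-up is if anything more explicit than the paper's, which leaves the final equilibrium verification implicit.
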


The lower bound on the PoA in the proposition is proven in the following example.

\begin{example}
Consider the network in Figure~\ref{fig:example_networks}~(a), with flow demand of $1$ for each vehicle type. The socially optimal routing has the flow of type $1$ on the bottom road and type $2$ on the top road, for a social cost of $2$. The worst-case equilibrium has this routing reversed, for a social cost of $2k$. This yields a PoA of $k$.

Due to the symmetry of the optimal routing, the anonymous tolls of Theorem~\ref{thm:anonymous} are equal on the two roads, with a worst-case equilibrium routing again having a social cost of $2k$, yielding a PoA of $k$. Thus, in this network, the anonymous tolls provided do not improve the worst-case performance. Further, due to the symmetry in this example, the two roads have identical inputs to the tolling function for any tolling function of the form proposed in Proposition~\ref{prop:anonymous_tolling}. Thus the two roads must have identical tolls, again yielding a PoA of $k$. This proves the proposition.

This class of tolls in Proposition~\ref{prop:anonymous_tolling} (of which the tolls in Theorem~\ref{thm:anonymous} are a member) has the advantage of depending only on the optimal routing, and not on calculating a specific equilibrium routing, which is often much more computationally difficult to find. However, if the tolls are not restricted to the above form, the social cost can be reduced slightly to $(7k+3)/4 - 1/(k+1)$, as shown in \cite{lazar2020optimal}. The resulting Price of Anarchy in this case is used as our lower bound on PoA for anonymous tolling. We compare the untolled PoA upper bound to the anonymous PoA upper bound and this lower bound in Figure~\ref{fig:bounds}~(b).
\end{example}

We next present an example in which our proposed anonymous tolling \emph{does} improve the Price of Anarchy in a network from what it is without tolling. 

\begin{example}
Consider the network shown in Figure~\ref{fig:example_networks}~(b), with vehicle flow demand of $1/\sqrt{k}$ units of type $1$ and $1$ unit of type $2$. The optimal routing has the flow of type $1$ entirely on the top road and type $2$ on the bottom road, for a cost of $1/\sqrt{k} + 1/(\sqrt{k}+1)$. The worst-case equilibrium has all flow on the bottom road, for a social cost of $1+1/\sqrt{k}$, yielding a PoA which scales with $\sqrt{k}$ \cite{lazar2020routing}.

With the tolling from Theorem~\ref{thm:anonymous}, both vehicle types experience a toll on road $1$ of $0$ and a toll on road $2$ of $1/(\sqrt{k}+1)$. The resulting worst-case equilibrium has all flow of vehicle type $1$ on the lower road and vehicle type $2$ on the upper road, for a social cost of $1 + 1/(\sqrt{k} + 1)$. This is an improvement from the untolled worst-case equilibrium cost of $1+1/\sqrt{k}$.

The worst-case equilibrium social cost could be further improved with a toll on road $2$ of $\tau_2=1/2$, with the resulting worst-case equilibrium involving flow of vehicle type $1$ split between the two roads and all vehicle flow of type $2$ on the top road for a social cost of $1+(3\sqrt{k}-1)/(4k)$. 
\end{example}

\section{INFINITESTIMALLY DIFFERENTIATED TOLLS}
\label{sct:anonymous}
In this section we consider a setting of a parallel network of $n$ roads, where roads have affine latency functions. We present theoretical results with tolls that are almost anonymous and only differentiated with $\epsilon$-differentiation, where $\epsilon$ is arbitrarily small, which completely eliminate inefficiency due to selfish routing. Before presenting our theoretical results, we first introduce some necessary notation.

For a specific routing $z$, we use $\sharedroads^z_j$ to denote the set of roads with positive flow of vehicle type $j$:
\begin{equation*}
	\sharedroads^z_j = \{i : z^j_i > 0 \land i \in \roadset \} \; .
\end{equation*}
Similarly, we use $\typesonroad^z_i$ to denote the set of vehicle types with positive flow on road $i$ for the routing $z$:
\begin{equation*}
	\typesonroad^z_i = \{j : z^j_i > 0 \land j \in \vtypeset \} \; .
\end{equation*}

The theoretical results established in the next section conceptualize vehicle flow on roads in the form of a graph, where for each specific routing $z$, a graph can be constructed. We construct a bipartite graph $G=(U,V,E)$ where one set of nodes is the set of roads ($U = \roadset$) and one set of nodes is the set of vehicle types ($V=\vtypeset$). The set of edges connect vehicle types to roads on which they have positive flow, \emph{i.e.}
\begin{equation}\label{eq:graph}
	E = \{ (i, j) : i \in \roadset \land  j \in \typesonroad^z_i \} \; ,
\end{equation}
or equivalently, $E = \{ (i, j) : z^j_i>0 \land i \in \roadset \land j \in \vtypeset \}$. In other words, for a routing $z$, there is an edge between the nodes denoting road $i$ and vehicle type $j$ if there is positive flow of type $j$ on road $i$. We illustrate this in Figure~\ref{fig:cyclic_acyclic}.

\begin{figure}
	\centering
	\includegraphics[width=\linewidth]{./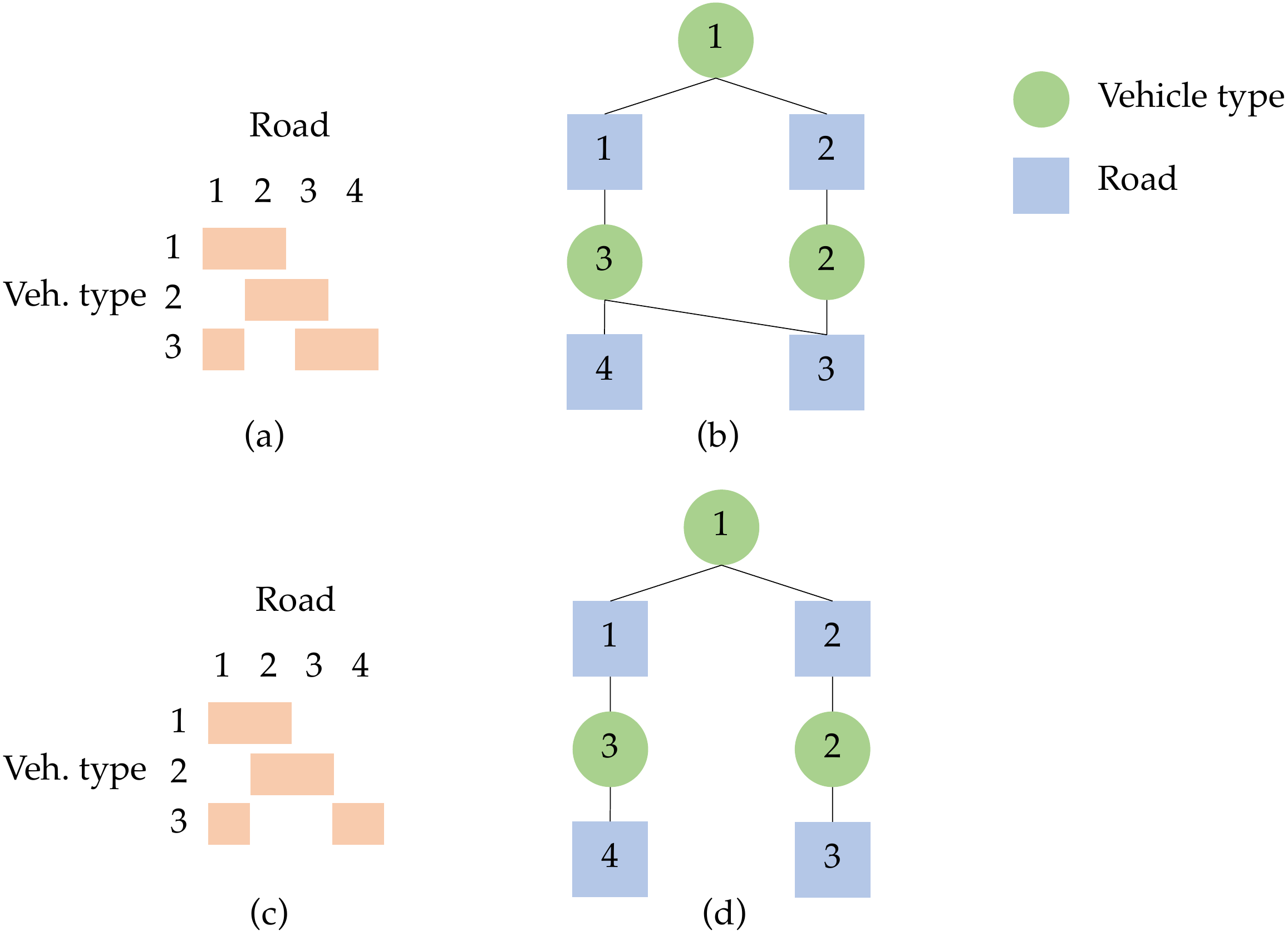}
	\caption{Example routings for a network with four roads and three vehicle types \cite{lazar2020optimal}. (a) vehicle type 1 has positive flow on roads 1 and 2, type 2 has positive flow on roads 2 and 3, and type 3 has positive flow on roads 1, 3, and 4. (b) shows the corresponding bipartite graph. (c) shows a similar routing but type 3 has zero flow on road 3, and (d) shows its corresponding bipartite graph.}
	\label{fig:cyclic_acyclic}
\end{figure}

We make the following assumption regarding the latency functions.
\begin{assumption}\label{asmp:increasing_cost}
	The road latency functions are affine and the latency of each road is strictly increasing with the flow of each vehicle type on that road, \emph{i.e.} $a^j_i>0$ for all $i\in \roadset$, $j \in \vtypeset$.
\end{assumption}

We rephrase the Wardrop Equilibrium condition for the case of parallel networks.
\begin{definition}\label{def:NE}
	A flow $z$ is a \emph{Wardrop Equilibrium} if $z^j_i > 0$ implies $c^j_i(z) \le c^j_{i'}(z)$ for all $i, i' \in \roadset$, $j \in \vtypeset$.
\end{definition}
Since in this section we consider a network of parallel roads, a flow is at Wardrop Equilibrium if no user can decrease their cost by switching roads.

In this section we establish tolls which ensure that the social cost is minimized in any resulting equilibrium. We do this in two theorems: the first establishes properties about a routing in the set of routings which minimizes the social cost, and the second provides optimal tolls which are constructed based on the routing which is proved to exist in the first theorem.

\begin{theorem}[\cite{lazar2020optimal} Theorem~1]\label{thm:routing}
	Consider the setting of multitype congestion games with affine cost functions on parallel networks. There exists a routing in the set of routings minimizing social cost, $z \in \mathcal{Z}^*$, such that $G(z)$ is acyclic, where $G=(U,V,E)$ is constructed as in \eqref{eq:graph}, \emph{i.e.} where nodes are the roads and vehicle types, and edges exist between road $i$ and vehicle type $j$ when $z^j_i>0$.
\end{theorem}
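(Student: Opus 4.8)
The plan is to start from an arbitrary socially optimal routing $z \in \mathcal{Z}^*$ and, whenever the associated bipartite graph $G(z)$ contains a cycle, perform a flow-shifting operation along that cycle that keeps the routing feasible, does not increase the social cost, and strictly reduces the number of edges in $G(z)$. Iterating this finitely many times (the number of edges is bounded by $mn$) produces an acyclic optimal routing. This is a standard ``uncrossing''/cycle-cancelling argument, adapted to the bipartite road–type structure.

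First I would set up the shift. A cycle in $G=(U,V,E)$ alternates between road-nodes and type-nodes, say $i_1 - j_1 - i_2 - j_2 - \cdots - i_L - j_L - i_1$, where every listed pair $(i_\ell, j_\ell)$ and $(i_{\ell+1}, j_\ell)$ is an edge, i.e.\ has strictly positive flow. Along this cycle I define a perturbation parameterized by $\delta$: on the edges traversed ``forward'' I add $\delta$ to the corresponding $z^{j}_{i}$, and on the edges traversed ``backward'' I subtract $\delta$. The key point is that at each type-node $j_\ell$ the cycle uses exactly two incident edges, one in each direction, so the total flow of type $j_\ell$ is unchanged; hence every commodity's demand is still met and $z(\delta)$ remains feasible for all sufficiently small $|\delta|$ (here Assumption~\ref{asmp:increasing_cost} is not needed, only nonnegativity, which holds on a small interval around $\delta=0$ since all the involved flows are strictly positive). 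Note the perturbation changes the \emph{per-type} flows on roads but I must also track the \emph{total} flow $\sum_j z^j_i$ on each road $i_\ell$ in the cycle, since that is what enters the latency; because the cycle visits road $i_\ell$ via edges to two distinct types, the net change to $i_\ell$'s total flow is either $+2\delta$, $-2\delta$, or $0$ depending on the directions.

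Next I would examine $C(z(\delta))$ as a function of $\delta$. Writing $C(z) = \sum_i \langle A_i z_i + b_i \mathbf{1}, z_i\rangle$ and using $A_i = \mathbf{1}\otimes a_i$, the social cost on road $i$ depends on $z_i$ only through the total flow $Z_i := \sum_j z^j_i$, namely it equals $Z_i(\langle a_i, z_i\rangle + b_i)$ — I should double-check this reduces to a function of $Z_i$ alone given the structure; if the latency seen by all types on road $i$ is $\langle a_i, z_i \rangle + b_i$ and it is common to all types, then road $i$'s contribution is $Z_i \cdot (\text{common latency})$, which is quadratic in the flows. Restricted to the line $\delta \mapsto z(\delta)$, $C$ is therefore a quadratic (hence convex) function of $\delta$ on an interval containing $0$. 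Since $z=z(0)$ is a global minimizer of $C$ over $\mathcal{Z}^*$ — in particular over this feasible segment — the quadratic is minimized at $\delta=0$, so either it is constant in $\delta$ or $\delta=0$ is its unique minimizer on that segment. In the constant case, I may move $\delta$ to the first value at which some cycle edge flow hits zero; in the strictly-convex case, I note $C(z(\delta)) \le C(z(0))$ fails for $\delta\neq 0$ small, which contradicts optimality unless the quadratic coefficient is zero — so in fact optimality forces the quadratic to be flat along \emph{every} such cycle direction, and I can again push $\delta$ to the boundary. Either way I reach a new routing $z'\in\mathcal{Z}^*$ in which at least one edge of the cycle has been removed (its flow zeroed) and no new edges created, so $|E(G(z'))| < |E(G(z))|$.

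Finally, iterating the above terminates because $|E|$ strictly decreases and is a nonnegative integer, yielding $z\in\mathcal{Z}^*$ with $G(z)$ acyclic. The step I expect to be the main obstacle — and the one to state carefully — is the convexity/optimality argument showing the cost cannot strictly decrease along a cycle shift: I need the clean reduction of each road's social cost to a (convex) quadratic in the scalar shift parameter $\delta$, which hinges on all vehicle types sharing the same latency $\langle a_i,z_i\rangle + b_i$ on road $i$, and I must handle the degenerate ``flat'' case by arguing I can still slide $\delta$ to the boundary of feasibility without leaving $\mathcal{Z}^*$. Since this is Theorem~1 of \cite{lazar2020optimal}, I would cite that source for the full details and only sketch the construction here.
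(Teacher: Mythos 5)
The paper does not prove this theorem itself; it imports it verbatim from \cite{lazar2020optimal}, whose proof is the cycle-cancelling argument you are attempting. Your overall strategy (perturb flow around a cycle of $G(z)$ to zero out an edge while staying optimal, then iterate) is the right one, but your execution has a genuine gap in the key step. You allow sign assignments around the cycle that conserve each type's total flow but \emph{change the total flow on the roads} (your ``$+2\delta$, $-2\delta$, or $0$'' case). Along such a direction $C(z(\delta))=\sum_i (Z_i+\delta\sigma_i)(\langle a_i,z_i\rangle+b_i+\delta\langle a_i,d_i\rangle)$ is a quadratic whose leading coefficient $\sum_i \sigma_i\langle a_i,d_i\rangle$ can be strictly positive (e.g.\ a two-road, two-type cycle where both types are pushed from one road to the other). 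A strictly convex quadratic with its vertex at $\delta=0$ is perfectly consistent with $z(0)$ being socially optimal, so your claim that ``optimality forces the quadratic to be flat along every such cycle direction'' is false; in that case you cannot slide $\delta$ to the boundary, and the edge-removal step fails.

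The fix is to commit to the \emph{alternating} sign assignment around the (even) bipartite cycle: then every node --- type node \emph{and} road node --- has one $+\delta$ and one $-\delta$ incident edge, so both the per-type demands and the per-road totals $Z_i$ are preserved. Since all types on road $i$ see the common latency $\langle a_i,z_i\rangle+b_i$ and $Z_i$ is now fixed, the restricted cost $\sum_i Z_i(\langle a_i,z_i\rangle+b_i+\delta\langle a_i,d_i\rangle)$ is \emph{affine} in $\delta$. An affine function minimized at an interior point of its feasible interval (interior because every cycle edge has strictly positive flow) must be constant, so you may push $\delta$ until some cycle flow hits zero, removing an edge of $G$ without leaving $\mathcal{Z}^*$ and without creating new edges. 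Iterating on the integer $|E|\le mn$ then terminates as you describe. With that one correction --- which also removes the need for your ``strictly convex case'' discussion entirely --- the argument matches the cited proof.
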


\begin{corollary}\label{cor:mixed}
	For the routing $z \in \mathcal{Z}^*$, such that $G(z)$ is acyclic, provided by Theorem \ref{thm:routing}, no two vehicle types share more than one common road.
\end{corollary}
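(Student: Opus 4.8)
The plan is to argue by contradiction, reducing the statement immediately to the acyclicity of $G(z)$ guaranteed by \cref{thm:routing}. Suppose that two distinct vehicle types $j, j' \in \vtypeset$ share two distinct common roads $i, i' \in \roadset$; by definition of "sharing a common road" this means $z^j_i > 0$, $z^{j'}_i > 0$, $z^j_{i'} > 0$, and $z^{j'}_{i'} > 0$. By the construction of the bipartite graph in \eqref{eq:graph}, all four edges $(i,j)$, $(i,j')$, $(i',j)$, $(i',j')$ then belong to $E$.

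Next I would observe that these four edges, together with the four distinct vertices $i, i' \in U$ and $j, j' \in V$, form the closed walk $i - j - i' - j' - i$ in $G(z)$ in which neither a vertex nor an edge is repeated; that is, they constitute a simple cycle of length four (indeed the shortest cycle a bipartite graph can contain). The distinctness $i \neq i'$ and $j \neq j'$, which is exactly the hypothesis "more than one common road" and "two vehicle types," is what guarantees this is a genuine cycle rather than a degenerate walk.

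The existence of such a cycle contradicts the conclusion of \cref{thm:routing} that $G(z)$ is acyclic. Hence no two vehicle types can share more than one common road, proving the corollary. I do not anticipate a real technical obstacle: the only point needing care is verifying that the four edges close up into a \emph{simple} cycle, which follows at once from the distinctness of $i,i'$ and of $j,j'$, so the whole argument is just translating the combinatorial hypothesis into the edge set $E$ and invoking acyclicity.
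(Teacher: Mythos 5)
Your proof is correct and matches the argument the paper intends: the corollary is stated without an explicit proof precisely because two types sharing two roads would create the four-cycle $i\text{--}j\text{--}i'\text{--}j'\text{--}i$ in the bipartite graph $G(z)$, contradicting the acyclicity guaranteed by Theorem~\ref{thm:routing}. Your care in checking that the four edges form a simple (non-degenerate) cycle is exactly the right detail to verify, and nothing further is needed.
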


\begin{theorem}\label{thm:almost_anonymous}
Consider the setting of multitype congestion games with affine cost functions on parallel networks. Consider any optimal routing $\prevflow \in \mathcal{Z}^*$ that has an associated acyclic graph. The existence of such a routing is provided in Theorem~\ref{thm:routing}. Assume the latency on a road is increasing in the flow of each vehicle type on that road. Then levy the following tolls $\tau(\prevflow)$:
	\begin{equation}\label{eq:tolls}
		\tau^j_i(\prevflow_i) = \begin{cases}
		\mu- c_i(\prevflow_i) & \text{if} \; i \in \sharedroads^\prevflow_j \\
		\mu- c_i(\prevflow_i) + \epsilon & \text{if} \; i \in \cup_{j' \in {[}m{]} }\sharedroads^\prevflow_{j'} \setminus \sharedroads^\prevflow_{j} \\
		P & \text{otherwise} .
	\end{cases}
	\end{equation}
	Then for every $\mu$, and every $\epsilon >0$, and sufficiently large $P$, the only Nash equilibrium that exists is the flow $\prevflow$.
\end{theorem}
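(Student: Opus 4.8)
The plan is to prove the claim in two halves: first that $\prevflow$ is itself a Nash equilibrium under the tolls $\tau(\prevflow)$, and then that it is the only one. The first half is immediate from Definition~\ref{def:NE}: at the flow $\prevflow$, the cost seen by a user of type $j$ on a road $i\in\sharedroads^\prevflow_j$ is $c_i(\prevflow_i)+\tau^j_i=\mu$; on a road used by some type but not by $j$ it is $\mu+\epsilon$; and on a road unused at $\prevflow$ it is $c_i(\prevflow_i)+P=b_i+P$, which exceeds $\mu$ once $P$ is large. So every type is indifferent among exactly the roads it uses at $\prevflow$ and strictly prefers them to every other road, and $\prevflow$ is a Nash equilibrium in which every type has equilibrium cost $\mu$.

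For the converse, I would first discard the roads that are unused at $\prevflow$. On any feasible routing the flow on each road, hence its latency $c_i(\cdot)$, is bounded by a constant depending only on the total demand and the parameters $a^j_i,b_i$, so the cost of any type on any road in $R^\ast:=\cup_{j'}\sharedroads^\prevflow_{j'}$ is bounded independently of $P$. Taking $P$ larger than this bound plus $\mu$, in any equilibrium $\newflow$ no type can carry flow on a road outside $R^\ast$, since such a road costs at least $P$ while a road in that type's $\prevflow$-support is available at strictly smaller cost. Hence every equilibrium is supported on $R^\ast$, and there I can write the cost of type $j$ at $\newflow$ compactly: with $\delta_i:=c_i(\newflow_i)-c_i(\prevflow_i)=\langle a_i,\newflow_i-\prevflow_i\rangle$, it equals $\mu+\delta_i$ when $i\in\sharedroads^\prevflow_j$ and $\mu+\delta_i+\epsilon$ otherwise. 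Writing $\phi_j$ for type $j$'s equilibrium cost minus $\mu$, the equilibrium conditions become $\delta_i\ge\phi_j$ on $i\in\sharedroads^\prevflow_j$ and $\delta_i\ge\phi_j-\epsilon$ on $i\in R^\ast\setminus\sharedroads^\prevflow_j$, with equality whenever $\newflow^j_i>0$.

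The core of the argument is to show that these conditions, together with the acyclicity of $G(\prevflow)$ — equivalently Corollary~\ref{cor:mixed}, that no two types share more than one road at $\prevflow$ — and flow conservation force $\newflow=\prevflow$. The $\epsilon$-differentiation buys a strict cost separation: if a type $j$ carried flow on a road $i$ it does not use at $\prevflow$, then since $i\in R^\ast$ that road belongs to $\sharedroads^\prevflow_{j'}$ for some other type $j'$, and comparing the two equilibrium conditions at $i$ gives $\phi_{j'}\le\phi_j-\epsilon$, strictly cheaper. Ordering the types by $\phi$ and combining this with acyclicity — so that a chain of such spillovers cannot close into a cycle — one concludes that $G(\newflow)$ is a subforest of $G(\prevflow)$; then the indifference equalities, flow conservation along the forest, strict monotonicity ($a^j_i>0$), and optimality of $\prevflow$ pin down $\newflow=\prevflow$.

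I expect this last step to be the main obstacle. Because the tolls enter additively and do not depend on $z$, the operator $c^\tau$ has the same Jacobian as $c$, which in the mixed-autonomy setting is neither symmetric nor monotone; the standard route to uniqueness — a strictly monotone variational inequality has a unique solution — is therefore unavailable, and its failure is precisely what makes tolling in mixed autonomy difficult. Uniqueness must instead be extracted combinatorially, and the delicate parts are making the spillover/ordering argument rigorous (showing the $\epsilon$-gap rules out every alternative support pattern) and then the forest bookkeeping (showing that once the support is confined to a subforest of $G(\prevflow)$, flow conservation and optimality leave no slack).
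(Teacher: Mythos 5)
Your setup is sound and matches the paper's first two reductions: discarding the $P$-tolled roads (the paper's Proposition~\ref{prop:toll_structure}) and the local inequality you extract from the $\epsilon$-differentiation --- if type $j$ carries flow on a road $i\in\sharedroads^{\prevflow}_{j'}\setminus\sharedroads^{\prevflow}_{j}$ then $\phi_{j'}\le\phi_j-\epsilon$ --- is exactly the content of the paper's Lemma~\ref{lma:relative_costs}. But the step you flag as ``the main obstacle'' is in fact the theorem, and your proposed mechanism for it does not work. The claim that $G(\newflow)$ is a subforest of $G(\prevflow)$ (i.e.\ that no spillover occurs at equilibrium) cannot be obtained from ``a chain of spillovers cannot close into a cycle'': the $\phi$-ordering already guarantees that spillover chains are acyclic, so acyclicity of the chain excludes nothing. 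A single, perfectly acyclic spillover is consistent with your ordering --- e.g.\ two roads, type $1$ on road $1$ and type $2$ on road $2$ at $\prevflow$, with type $1$ moving entirely onto road $2$ at $\newflow$; this satisfies $\phi_2\le\phi_1-\epsilon$ and no cycle forms. What rules it out is quantitative: road $2$ now carries strictly more flow and road $1$ strictly less, so (by $a^j_i>0$) the ``standard cost'' $c_i(\newflow_i)+\mu-c_i(\prevflow_i)$ rises above $\mu$ on the receiving road and falls below $\mu$ on the abandoned one, contradicting the Wardrop requirement that the receiving road be \emph{cheaper} by $\epsilon$. In general this comparison needs flow conservation across whole connected blocks of roads and the \emph{optimality} of $\prevflow$, which you list as an ingredient but never invoke.

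The paper's proof supplies precisely this missing machinery: it partitions the used roads into $\mathcal{I}_1$ (roads that acquired a new type, closed up under sharing types in $\prevflow$), $\mathcal{I}_2$ (the roads those types came from, similarly closed up), and $\mathcal{I}_3$; shows via the $\epsilon$-gap that every road in $\mathcal{I}_1$ has standard cost $\underline{c}$ and every road in $\mathcal{I}_2$ has $\underline{c}+\epsilon$; and then proves (Lemmas~\ref{lma:std_cost_greater_flow} and~\ref{lma:std_cost_lesser_flow}) that aggregate flow conservation plus optimality of $\prevflow$ force the $\mathcal{I}_1$ costs to be $\ge\mu$ and the $\mathcal{I}_2$ costs to be $\le\mu$, which is incompatible with $\mathcal{I}_2$ sitting $\epsilon$ \emph{above} $\mathcal{I}_1$. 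You would need to reconstruct an argument of this type; the ordering-by-$\phi$ alone does not substitute for it. Separately, your final step --- that once every type is confined to its $\prevflow$-support the equilibrium is unique and equal to $\prevflow$ --- is itself a nontrivial statement (the paper's Lemma~\ref{lma:same_roads_unique}, resting on the acyclicity of $G(\prevflow)$ and Corollary~\ref{cor:mixed}); as written you only name the ingredients. As it stands the proposal is a correct outline of \emph{where} the difficulty lies, but the two load-bearing deductions are asserted rather than proved.
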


In words, we find an optimal routing with the acyclic property guaranteed by Thm~\ref{thm:routing}. Denote this routing $z^*$. Then, based on this routing, we assign a toll for each vehicle type on each road. If $z^*$ has no vehicle flow on a specific road, we set a very high toll for all vehicle types, ensuring that in equilibrium, vehicle flow on this road will be zero for all vehicle types. If a vehicle type has positive flow on road $i$ under $z^*$, it is given the toll $\mu - c_i(z_i)$. If it does not have positive flow under $z^*$ but another vehicle type does, it is given the toll $\mu - c_i(z_i) + \epsilon$, where $\epsilon$ is any positive number. This guarantees the optimal routing at equilibrium.

We see that optimal routing can be guaranteed with an infinitesimal difference between the tolls of each vehicle type on a road. We can even consider the setting in which users are given random gifts to help encourage them to follow the route recommendations and use these to ensure optimal routing without having a different toll for the vehicle types. We now prove the theorem and defer the proofs of the supporting lemmas to the appendix.

\begin{proof}

We prove this theorem by breaking down a candidate equilibrium $\newflow$ into cases. We base these cases on whether roads, which previously had flow of certain vehicle types in the optimal routing on which the tolls are based ($\prevflow$), have positive flow of those same vehicle types in the candidate equilibrium. By analyzing each case, we show that $\newflow=\prevflow$.

We first present a proposition suggesting that for sufficiently large $P$, no equilibrium flow will exist on roads with toll $P$. We then provide a lemma stating that in any equilibrium in which all vehicle types use the roads they were on in the routing on which the tolls were based, $\prevflow$, then the equilibrium flow will be equal to $\prevflow$. Finally, we examine the case in which $\newflow$ has at least one vehicle type with positive flow on some road or roads which did not have positive flow of the vehicle type in $\prevflow$ (though not on a road with toll $P$). To reiterate, our three cases are as follows:
\begin{enumerate}[nosep]
	\item $\newflow$ has some vehicle type with positive flow on a road which did not have positive vehicle flow of any type in $\prevflow$,
	\item $\newflow$ is such that each vehicle type has positive flow on the same roads as that vehicle type did in $\prevflow$, or \label{cases:case2}
	\item $\newflow$ is such that at least one vehicle type has positive flow on a road which, in $\prevflow$, did not have positive flow of that vehicle type but did have positive flow of another vehicle type. \label{cases:case3}
\end{enumerate} 

The first case is handled in a proposition, the second in a lemma, and the third in the subsequent development.

\begin{proposition}\label{prop:toll_structure}
	For sufficiently large $P$, users in equilibrium will not use a road with toll $P$. Formally, for equilibrium flow $f$ experiencing tolls $\tau(\prevflow)$,
	$$
	f^j_i = 0 \; \forall j \in \vtypeset \land i \in \roadset \setminus \cup_{j' \in {[}m{]} }\sharedroads^\prevflow_{j'} \; .
	$$
\end{proposition}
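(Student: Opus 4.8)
The plan is to argue by contradiction and turn the enormous toll $\largenum$ against itself: if an equilibrium flow $f$ had positive flow of some type on a road whose toll is $\largenum$, the Wardrop condition would force $\largenum$ to be bounded by a constant that depends only on the network data, $\mu$, and $\epsilon$ — contradicting the choice of $\largenum$ ``sufficiently large.''

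The first step is to use inelasticity of demand to bound every road's flow uniformly over all equilibria. Let $D := \sum_{r \in [R]} \beta_r$ denote the total demand; since $f$ is feasible, $\sum_{i \in \roadset}\sum_{j \in \vtypeset} f^j_i = D$, so in particular every component satisfies $f^{j}_{i} \le D$. If $D = 0$ the claim is trivial, so assume $D > 0$; then, because $\prevflow$ is feasible, there is at least one road $\idxroadalt$ with positive $\prevflow$-flow of some vehicle type, and by construction its toll $\tau^j_{\idxroadalt}$ is one of the two \emph{finite} expressions in \eqref{eq:tolls}, namely $\mu - c_{\idxroadalt}(\prevflow_{\idxroadalt})$ or $\mu - c_{\idxroadalt}(\prevflow_{\idxroadalt}) + \epsilon$. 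Combining this with the bound $c_{\idxroadalt}(f_{\idxroadalt}) = a_{\idxroadalt}^T f_{\idxroadalt} + b_{\idxroadalt} \le D\,\langle \mathbf{1}_m, a_{\idxroadalt}\rangle + b_{\idxroadalt}$, the cost seen by any type $j$ on road $\idxroadalt$, $c^j_{\idxroadalt}(f) = c_{\idxroadalt}(f_{\idxroadalt}) + \tau^j_{\idxroadalt}$, is at most a constant $K$ that is independent of $\largenum$.

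The second step is the Wardrop comparison. Suppose, for contradiction, that $f^j_i > 0$ for some $j \in \vtypeset$ and some road $i \in \roadset \setminus \cup_{j' \in [m]}\sharedroads^{\prevflow}_{j'}$, which carries toll $\largenum$ for every type. By Definition~\ref{def:NE}, road $i$ minimizes the cost of type $j$ over all roads, so $c^j_i(f) \le c^j_{\idxroadalt}(f) \le K$. On the other hand, since latencies are nonnegative, $c^j_i(f) = c_i(f_i) + \largenum \ge \largenum$. Hence $\largenum \le K$, and choosing $\largenum > K$ — which is legitimate since $K$ depends only on the $a_i$, $b_i$, $D$, $\mu$, $\epsilon$ — yields the contradiction, proving $f^j_i = 0$ for all $j \in \vtypeset$ and all $i \in \roadset \setminus \cup_{j' \in [m]}\sharedroads^{\prevflow}_{j'}$.

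I do not anticipate a serious obstacle here; the only point requiring care is that the bound $K$ must be genuinely uniform across all equilibria, which is exactly what inelasticity buys us, since it caps every road's flow regardless of which equilibrium is selected. I would also remark that in this parallel-network section every road is available to every commodity, so the comparison road $\idxroadalt$ is a legitimate entry in the Wardrop comparison for type $j$; if one wished to permit commodity-specific road subsets, one would instead select $\idxroadalt$ among the roads available to the relevant commodity that carry positive $\prevflow$-flow, which still exists by feasibility of $\prevflow$.
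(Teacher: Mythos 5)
Your proof is correct. The paper in fact states Proposition~\ref{prop:toll_structure} without giving any proof, treating it as immediate from the toll structure, so there is no argument of the authors' to compare against; yours supplies exactly the justification one would expect. Inelastic demand uniformly bounds every edge flow by the total demand $D$, hence the tolled cost of any road carrying positive $\prevflow$-flow of some type is bounded by a constant $K$ depending only on the $a_i$, $b_i$, $D$, $\mu$, and $\epsilon$; the Wardrop condition on a parallel network then forces $P \le K$ on any $P$-tolled road with positive equilibrium flow, a contradiction once $P > K$. The two points you flag for care are the right ones: $K$ must be uniform over all equilibria (so that a single choice of $P$ works, which the demand bound guarantees), and in a parallel network every road is an admissible comparison road in Definition~\ref{def:NE}, so the comparison with a finitely tolled road is always available.
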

This means that no equilibrium will have positive flow on roads that did not have positive flow in $z^*$.

\begin{lemma}\label{lma:same_roads_unique}
	Consider the setting of multitype congestion games with affine cost functions on parallel networks. Consider any routing $\prevflow$ that has an associated acyclic graph; the existence of such a routing is provided in Theorem~\ref{thm:routing}. Assume that in routing $\prevflow$, every road has positive flow of some vehicle type. Then levy the following tolls $\tau(\prevflow)$:
	\begin{equation}\label{eq:tolls}
		\tau^j_i(\prevflow) = \mu- \ell_i(\prevflow)
	\end{equation}
	Consider a new equilibrium routing $\newflow$ with flow demand of each type no greater than the demand of that type in $\prevflow$, meaning
	\begin{equation}
		(\forall j \in {[}m{]})[\sum_{i \in {[}n{]}}\prevflow^j_i \le \sum_{i \in {[}n{]}}\newflow^j_i  ] \; .
	\end{equation}
	Moreover, assume that each vehicle type has positive flow in $\newflow$ only on roads that it has positive flow on in $\prevflow$, meaning
	\begin{equation}
		(\forall j \in {[}m{]})[\sharedroads^\newflow_j \subseteq \sharedroads^\prevflow_j] \; .
	\end{equation}
	Then under Assumption~\ref{asmp:increasing_cost}, for every $\mu$,
	\begin{enumerate}[nosep]
		\item Equilibrium routing $\newflow$ is unique, meaning for any other equilibrium $\newflow'$, $$(\forall i\in {[}n{]} \land \forall j \in {[}m{]})[\newflow'^j_i = \newflow^j_i] \; . $$
		\item If $\newflow$ has the same flow demand as $\prevflow$, then $\newflow = \prevflow$. Formally, $$(\forall j \in {[}m{]})[\sum_{i \in {[}n{]}}\newflow^j_i = \sum_{i \in {[}n{]}}\prevflow^j_i  ] \implies \newflow = \prevflow \; .$$
	\end{enumerate}
\end{lemma}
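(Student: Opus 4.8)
Statement~2 will follow from statement~1. Indeed, with the tolls $\tau^j_i(\prevflow)=\mu-\ell_i(\prevflow_i)$ the tolled cost of every vehicle type on every road equals exactly $\mu$ at the routing $\prevflow$, so $\prevflow$ is itself a Wardrop Equilibrium; it trivially satisfies $\sharedroads^\prevflow_j\subseteq\sharedroads^\prevflow_j$ and has per-type demand equal to its own, so it is an admissible competitor in statement~1. Hence, in the equal-demand case of statement~2, if $\newflow$ is an equilibrium with the same per-type demand as $\prevflow$ then statement~1 forces $\newflow=\prevflow$. So the real task is statement~1: show that two Wardrop Equilibria of the tolled game with the same per-type demand, each having its type-$j$ support contained in $\sharedroads^\prevflow_j$, must coincide.

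Let $\newflow,\newflow'$ be two such equilibria and put $\Delta:=\newflow-\newflow'$. For each road $i$ let $\phi_i:=\langle a_i,\newflow_i-\prevflow_i\rangle$ and $\psi_i:=\langle a_i,\newflow'_i-\prevflow_i\rangle$ be the common, type-independent ``cost above $\mu$'' of road $i$ under the two routings, so that $\phi_i-\psi_i=\langle a_i,\Delta_i\rangle$. Two facts organize the proof. (i) $\Delta$ is conserved per type: $\sum_i\Delta^j_i=0$ for all $j$. (ii) $\Delta^j_i\neq0$ implies $\newflow^j_i>0$ or $\newflow'^j_i>0$, hence $i\in\sharedroads^\prevflow_j$, so the support of $\Delta$ is a subgraph of the bipartite graph $G(\prevflow)$, which Theorem~\ref{thm:routing} tells us is a forest. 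From Definition~\ref{def:NE}: if a type's flow on road $i$ strictly increased ($\Delta^j_i>0$) then $i$ lies on that type's min-cost set for $\newflow$, so $\phi_i$ equals that type's equilibrium offset $\nu_j$; symmetrically $\Delta^j_i<0$ gives $\psi_i=\nu'_j$; and in every case $\phi_i\ge\nu_j$ and $\psi_i\ge\nu'_j$.

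Assume $\Delta\neq0$ and take a connected component $T$ of its support forest. By (i), every type-node of $T$ has at least one incident edge along which $\Delta>0$ and at least one along which $\Delta<0$, so no type-node is a leaf and every leaf of the tree $T$ is a road-node. Starting from a leaf road of $T$ and using the equilibrium relations $\phi_i=\nu_j$ and $\psi_i=\nu'_j$, the one-sided inequalities, and the identities $\phi_i-\psi_i=\langle a_i,\Delta_i\rangle$, I would propagate bounds on the offsets $\{\nu_j\},\{\nu'_j\},\{\phi_i\},\{\psi_i\}$ across $T$; telescoping around the tree, Assumption~\ref{asmp:increasing_cost} ($a^j_i>0$, so a nonzero weighted net flow change strictly changes a road's cost) produces an inequality of the form $0\ge(\text{a strictly positive combination of the }a^j_i|\Delta^j_i|)$, a contradiction. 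This is exactly what happens in the smallest case (one type $j$, two roads, $\Delta^j_1=p>0=-\Delta^j_2$), where chaining the two Wardrop inequalities together with $\phi_1-\psi_1=a^j_1p$ and $\phi_2-\psi_2=-a^j_2p$ yields $0\ge(a^j_1+a^j_2)p>0$. Hence $\Delta=0$ on every component, so $\newflow=\newflow'$, and in particular $\newflow=\prevflow$.

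The combinatorial propagation in the previous paragraph is the crux, and the step I expect to be delicate: one must track, for each road of $T$, which endpoint of each incident edge is tight (an equality) versus only an inequality---the latter occurring precisely when that type's flow on that road has dropped to zero---and order the chaining so that the strict positivity supplied by Assumption~\ref{asmp:increasing_cost} is never cancelled out. It is worth stressing that a bare monotone-operator / Variational-Inequality argument cannot replace this: in the mixed-autonomy setting $z\mapsto c^\tau(z)$ is not monotone, since each block $A_i$ has all rows equal to $a_i^{T}$ and hence $A_i+A_i^{T}$ is indefinite unless the entries of $a_i$ are all equal, which is why both the acyclicity from Theorem~\ref{thm:routing} and the strictness from Assumption~\ref{asmp:increasing_cost} are genuinely needed.
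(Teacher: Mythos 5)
Your reduction of statement~2 to statement~1 is correct and is essentially what the paper does: under the tolls \eqref{eq:tolls} every road has tolled cost exactly $\mu$ for every type at $\prevflow$, so $\prevflow$ is itself a Wardrop Equilibrium, and uniqueness then forces $\newflow=\prevflow$ (the paper phrases the existence half via the Dafermos path tolls, but the content is the same). For statement~1 you and the paper part ways: the paper gives no argument at all, observing only that the hypotheses reproduce those of Theorem~2 of \cite{lazar2020optimal} and importing that proof, whereas you attempt a self-contained argument. Your ingredients are the right ones --- per-type conservation of $\Delta=\newflow-\newflow'$, containment of the support of $\Delta$ in the forest $G(\prevflow)$, and the Wardrop-derived comparison that for each type $j$ every road with $\Delta^j_i>0$ has $\delta_i:=\langle a_i,\Delta_i\rangle$ no larger than every road with $\Delta^j_i<0$ --- and your remark that a bare monotonicity/VI argument cannot substitute is well taken.

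The gap is that the crux, the ``telescoping around the tree,'' is asserted rather than carried out, and as literally described it does not obviously generalize beyond your two-road base case: the natural global sum $\sum_i \delta_i\,(\mathbf{1}^T\Delta_i)\le 0$ is exactly the indefinite quadratic form you warned about, so no single summation closes the argument. A clean way to finish is a monotone walk rather than a sum. Every type node of a component $T$ of the support forest has degree at least two (conservation), so some leaf of $T$ is a road $i_0$ carrying a single changed type $j_1$; then $\delta_{i_0}=a^{j_1}_{i_0}\Delta^{j_1}_{i_0}\ne 0$ by Assumption~\ref{asmp:increasing_cost}, say $\delta_{i_0}>0$. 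Conservation gives a road $i_1$ with $\Delta^{j_1}_{i_1}<0$, and your comparison gives $\delta_{i_1}\ge\delta_{i_0}>0$; since $a^{j_1}_{i_1}\Delta^{j_1}_{i_1}<0$, positivity of $\delta_{i_1}$ forces some $j_2\ne j_1$ with $\Delta^{j_2}_{i_1}>0$, hence a road $i_2\ne i_1$ with $\Delta^{j_2}_{i_2}<0$ and $\delta_{i_2}\ge\delta_{i_1}$, and so on. This yields an infinite non-backtracking walk in a finite forest, a contradiction; the case $\delta_{i_0}<0$ is symmetric. With that step supplied your proof of statement~1 is complete and, unlike the paper's, self-contained.
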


This lemma, which is proved in the appendix, handles Case~\ref{cases:case2} above. We will also use the first statement in proving Case~\ref{cases:case3}, which will be done in the remainder of this proof. 

Let us use $\mathcal{I}$ to denote the set of roads with positive flow in $z^*$:
\begin{equation}\label{eq:used_roads}
	\mathcal{I} = \{ i \in {[}n{]} \; | \; (\exists j \in {[}m{]}) {[} z^{*j}_i>0 {]}  \} \; .
\end{equation}

Next we define the notation $c_i^{\bar{\tau}}(z_i)$, which we call the \emph{standard cost} on a road, to denote the minimum cost on a road to the vehicle types which are on that road -- if it has vehicles which are on it in $z^*$ this road will have their tolled cost, otherwise it will have the large cost of $P$.
\begin{equation}\label{eq:min_cost}
	c_i^{\bar{\tau}}(z_i) :=  \begin{cases} c_i(z_i) + \mu - c_i(z_i^*) & \text{if} \; i \in \mathcal{I} \\
	P & \text{otherwise.} \end{cases}
\end{equation}

\begin{proposition}\label{prop:max_gap}
By the definition of Wardrop Equilibrium (Definition~\ref{def:NE}) and the structure of the tolls, all pairs of used roads will have standard cost within $\epsilon$ of each other. More specifically,
\begin{equation}\label{eq:cost_gap}
	\forall i, i' \in {[}n{]}, j \in {[}m{]} \;  \text{s.t.} \; \hat{z}^j_i > 0, \; c_i^{\bar{\tau}}(\hat{z}_i) \le c_{i'}^{\bar{\tau}}(\hat{z}_{i'}) + \epsilon \; .
\end{equation}
\end{proposition}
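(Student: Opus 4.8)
The plan is to chain the Wardrop condition of Definition~\ref{def:NE} with two elementary bounds that relate, on each road $i$, the tolled cost $c^{\tau_j}_i(\hz_i)$ experienced by a vehicle of type $j$ to that road's standard cost $c_i^{\bar\tau}(\hz_i)$ from \eqref{eq:min_cost}. Fix $i,i'\in\roadset$ and $j\in\vtypeset$ with $\hz^j_i>0$; the goal is $c_i^{\bar\tau}(\hz_i)\le c_{i'}^{\bar\tau}(\hz_{i'})+\epsilon$.

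First I would invoke Proposition~\ref{prop:toll_structure} to conclude $i\in\mathcal{I}$: if $i\notin\mathcal{I}=\cup_{j'\in\vtypeset}\sharedroads^\prevflow_{j'}$, then by the ``otherwise'' branch of \eqref{eq:tolls} road $i$ carries toll $P$, and for $P$ sufficiently large no equilibrium flow sits on it, contradicting $\hz^j_i>0$. Hence on road $i$ the toll $\tau^j_i$ is either $\mu-c_i(\prevflow_i)$ or $\mu-c_i(\prevflow_i)+\epsilon$, both at least $\mu-c_i(\prevflow_i)$; since $i\in\mathcal{I}$ gives $c_i^{\bar\tau}(\hz_i)=c_i(\hz_i)+\mu-c_i(\prevflow_i)$, this yields the lower bound $c_i^{\bar\tau}(\hz_i)\le c_i(\hz_i)+\tau^j_i=c^{\tau_j}_i(\hz_i)$.

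Second, since $\hz^j_i>0$, Definition~\ref{def:NE} applied to the tolled costs gives $c^{\tau_j}_i(\hz_i)\le c^{\tau_j}_{i'}(\hz_{i'})$, so it remains to bound the right side by $c_{i'}^{\bar\tau}(\hz_{i'})+\epsilon$. If $i'\in\mathcal{I}$, then \eqref{eq:tolls} gives $\tau^j_{i'}\le\mu-c_{i'}(\prevflow_{i'})+\epsilon$, whence $c^{\tau_j}_{i'}(\hz_{i'})=c_{i'}(\hz_{i'})+\tau^j_{i'}\le c_{i'}^{\bar\tau}(\hz_{i'})+\epsilon$, and chaining the three inequalities closes this case. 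If instead $i'\notin\mathcal{I}$, then $c_{i'}^{\bar\tau}(\hz_{i'})=P$ by \eqref{eq:min_cost}, while $c_i^{\bar\tau}(\hz_i)=c_i(\hz_i)+\mu-c_i(\prevflow_i)$ is bounded by a constant independent of $P$ (equilibrium flows, hence the affine latencies $c_i(\hz_i)$, are bounded since the total demand is finite, $\mu$ is fixed, and $c_i(\prevflow_i)\ge 0$), so for $P$ large enough $c_i^{\bar\tau}(\hz_i)\le P\le c_{i'}^{\bar\tau}(\hz_{i'})+\epsilon$ holds trivially.

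The argument is essentially mechanical, so I do not anticipate a substantive obstacle. The only point requiring care is the bookkeeping across the three branches of the toll rule \eqref{eq:tolls}: one must check that on the road $i$ that type $j$ actually uses, only the two finite branches can occur (this is exactly where Proposition~\ref{prop:toll_structure} enters), and that the extra ``$+\epsilon$'' in the off-support branch of \eqref{eq:tolls} evaluated at $i'$ is precisely what absorbs the slack, so nothing larger than $\epsilon$ is ever needed. One should also note that the threshold for ``sufficiently large $P$'' may depend on the fixed value of $\mu$, which is harmless.
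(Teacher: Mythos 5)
Your proposal is correct and follows essentially the same route as the paper: both arguments rest on the two observations that the tolled cost to a type using road $i$ is at least $c_i^{\bar\tau}(\hz_i)$ and that the tolled cost to any type on road $i'$ is at most $c_{i'}^{\bar\tau}(\hz_{i'})+\epsilon$, chained through the Wardrop condition (the paper simply states this in contrapositive form as a one-line argument). Your version is somewhat more careful, since it explicitly invokes Proposition~\ref{prop:toll_structure} to place $i$ in $\mathcal{I}$ and separately disposes of the $i'\notin\mathcal{I}$ case via the largeness of $P$, both of which the paper leaves implicit.
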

\begin{proof}
The reason for this is as follows. For any road $i$ with positive flow of vehicle type $j$, if there is another road with $c_{i'}^{\bar{\tau}}(\hat{z}_{i'}) + \epsilon < c_i^{\bar{\tau}}(\hat{z}_i)$ then $c_{i'}^{\tau_j}(\hat{z}_{i'}) < c_{i}^{\tau_j}(\hat{z}_{i})$, violating Wardrop Equilibrium conditions.
\end{proof}

So far we have established that in a Wardrop Equilibrium, all used roads will have standard cost within $\epsilon$ of each other, and no roads that were not used in $z^*$ will be used in this equilibrium routing. We will next investigate the standard costs on the roads as it relates the vehicle types on each road. 

We now define some additional notation: let $\underline{c}$ denote the minimum standard cost of all roads which were used in $z^*$, and $\overline{c}$ denote the maximum standard of all used roads which were used in $z^*$. Mathematically,
\begin{equation}\label{eq:min_max_cost}
\begin{aligned}
	\underline{c} = \min_{i \in \mathcal{I}} c_i^{\bar{\tau}}(z_i) \\
	\overline{c} = \max_{i \in \mathcal{I}} c_i^{\bar{\tau}}(z_i)
\end{aligned}
\end{equation}

Since by Lemma~\ref{lma:same_roads_unique} if no vehicle types in $\hat{z}$ are on roads they were not on in $z^*$, meaning for all $j \in {[}m{]}$, $\mathcal{N}^{\hat{z}}_j \subseteq \mathcal{N}^{z^*}_j$, then $\hat{z} = z^*$. Accordingly, we restrict our attention to the situation in which some road has a vehicle type in $\hat{z}$ that it did not have in $z^*$, meaning 
\begin{equation}
	\exists j \in {[}m{]}, i \in {[}n{]} \; \text{s.t.} \; \hat{z}^j_i > 0 \land z^{*j}_i = 0
\end{equation}

Next we make a statement about the roads containing vehicle types in $\hat{z}$ that were not there in $z^*$. 

\begin{lemma}\label{lma:relative_costs}
Any road that has a vehicle type on it in $\hat{z}$ but not in $z^*$ will have standard cost $\underline{c}$ under routing $\hat{z}$, and all roads that have this specific vehicle type in $z^*$ have standard cost $\overline{c}$ under routing $\hat{z}$. Mathematically,
\begin{align*}
&(\forall j \in {[}m{]} \land i \in {[}n{]} ) \big[ \hz^j_i > 0 \land z^{*j}_i = 0 \implies c^{\overline{\tau}}_i(\hat{z}_i) = \underline{c}] \; \text{, and} \\
&(\forall j \in {[}m{]} \land i \in {[}n{]} ) \big[ \hz^j_i > 0 \land z^{*j}_i = 0 \implies \\
&\qquad \qquad (\forall i' \in \mathcal{N}^{z^*}_j) [c^{\overline{\tau}}_{i'}(\hat{z}_{i'}) =\underline{c} + \epsilon]  \big] \; .
\end{align*}

Moreover,
\begin{equation}
	\overline{c} = \underline{c} + \epsilon \; .
\end{equation}
\end{lemma}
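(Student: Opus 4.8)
\emph{Proof plan.} I would prove the two displayed implications first and deduce $\overline c=\underline c+\epsilon$ at the end, since the equality follows from the implications together with Proposition~\ref{prop:max_gap}. Recall from \eqref{eq:min_cost} that for $i\in\mathcal I$ the standard cost is $c^{\bar\tau}_i(\hz_i)=c_i(\hz_i)+\mu-c_i(z^*_i)=\mu+\langle a_i,\hz_i-z^*_i\rangle$, so it equals $\mu$ exactly when $\hz_i=z^*_i$ and drops below $\mu$ precisely when road $i$ is ``underloaded'' (in the weighted sense $\langle a_i,\cdot\rangle$) relative to $z^*$.

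\emph{Step 1 (first implication, and a key inequality).} Fix a pair witnessing that $\hz$ falls in Case~\ref{cases:case3}: $j\in\vtypeset$, $i\in\roadset$ with $\hz^j_i>0$ and $z^{*j}_i=0$. By Proposition~\ref{prop:toll_structure}, $i\in\mathcal I$; and since $z^{*j}_i=0$ the road $i$ lies in $\cup_{j'\in\vtypeset}\sharedroads^{z^*}_{j'}\setminus\sharedroads^{z^*}_j$, so by the definition of the tolls the cost type $j$ sees on road $i$ is $c^{\bar\tau}_i(\hz_i)+\epsilon$. Since $\hz^j_i>0$, the Wardrop condition makes this the minimum over all roads of type $j$'s experienced cost. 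For an arbitrary road $\ell\in\mathcal I$ that cost is $c^{\bar\tau}_\ell(\hz_\ell)$ if $\ell\in\sharedroads^{z^*}_j$ and $c^{\bar\tau}_\ell(\hz_\ell)+\epsilon$ otherwise, while for $\ell\notin\mathcal I$ it is $b_\ell+P$ (prohibitive for large $P$, by Proposition~\ref{prop:toll_structure}); comparing each of these with $c^{\bar\tau}_i(\hz_i)+\epsilon$ yields $c^{\bar\tau}_i(\hz_i)\le c^{\bar\tau}_\ell(\hz_\ell)$ for every $\ell\in\mathcal I$, hence $c^{\bar\tau}_i(\hz_i)=\underline c$. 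As the identical argument applies to any road carrying a type it does not carry in $z^*$, this is the first implication. Restricting the comparison to $\ell\in\sharedroads^{z^*}_j$ gives the sharper inequality, which I will call $(\ast)$: $c^{\bar\tau}_\ell(\hz_\ell)\ge\underline c+\epsilon$ for every $\ell\in\sharedroads^{z^*}_j$.

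\emph{Step 2 (reducing everything else to one claim).} Type $j$ is present in $\hz$, hence has positive demand and is routed in $z^*$, so $\sharedroads^{z^*}_j\neq\emptyset$, and since its elements lie in $\mathcal I$, $(\ast)$ gives $\overline c\ge\underline c+\epsilon$. Conversely, Proposition~\ref{prop:max_gap} applied with reference road $i$ shows every road loaded under $\hz$ has standard cost at most $c^{\bar\tau}_i(\hz_i)+\epsilon=\underline c+\epsilon$. I claim the remaining assertions all follow once one establishes $(\ast\ast)$: $\underline c\ge\mu-\epsilon$. Indeed, a road $\ell\in\mathcal I$ left empty by $\hz$ has standard cost $b_\ell+\mu-c_\ell(z^*_\ell)$, which is $<\mu$ by Assumption~\ref{asmp:increasing_cost} (some type loads $\ell$ in $z^*$, so $c_\ell(z^*_\ell)>b_\ell$); under $(\ast\ast)$ this is $<\mu\le\underline c+\epsilon$, so every road of $\mathcal I$, loaded or empty, has standard cost in $[\underline c,\underline c+\epsilon]$, giving $\overline c\le\underline c+\epsilon$ and hence $\overline c=\underline c+\epsilon$. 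Furthermore no road of $\sharedroads^{z^*}_j$ can be empty under $\hz$, since $(\ast)$ would force its standard cost to be $\ge\underline c+\epsilon\ge\mu$ while it is $<\mu$; so every road of $\sharedroads^{z^*}_j$ is loaded, and combining Proposition~\ref{prop:max_gap} with $(\ast)$ pins its standard cost at exactly $\underline c+\epsilon$, which is the second implication.

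\emph{Step 3 (the obstacle).} The hard part is $(\ast\ast)$, i.e.\ that $\langle a_\ell,\hz_\ell-z^*_\ell\rangle\ge-\epsilon$ for every $\ell\in\mathcal I$: no road of $\mathcal I$ is underloaded by more than $\epsilon$. This is where I expect the real difficulty, because although the per-type total flows are conserved between $z^*$ and $\hz$, the weighted loads $\langle a_\ell,\cdot\rangle$ are not, so one cannot simply follow displaced vehicles from road to road. Instead I would exploit the acyclicity of $G(z^*)$: after reducing without loss of generality to the subnetwork of roads in $\mathcal I$ (so that every remaining road carries flow in $z^*$), process the tree $G(z^*)$ from its leaves inward, using strict monotonicity of the latencies (Assumption~\ref{asmp:increasing_cost}) and Corollary~\ref{cor:mixed} (no two types share two roads) to propagate the bound on $\langle a_\ell,\hz_\ell-z^*_\ell\rangle$ along edges of the tree, in the same spirit as the proof of Lemma~\ref{lma:same_roads_unique}. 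The rest of the lemma, by contrast, is just the short equilibrium comparison of Steps~1 and~2.
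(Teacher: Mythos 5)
Your Steps~1 and~2 are correct, and they in fact contain essentially the entire argument the paper gives: the paper derives the first implication exactly as you do (type $j$ pays the $\epsilon$-surcharge on road $i$ but not on roads of $\sharedroads^{z^*}_j$, so the Wardrop condition forces $c^{\bar\tau}_i(\hz_i)+\epsilon\le c^{\bar\tau}_{i'}(\hz_{i'})$ for $i'\in\sharedroads^{z^*}_j$ and $c^{\bar\tau}_i(\hz_i)\le c^{\bar\tau}_\ell(\hz_\ell)$ for all other $\ell\in\mathcal I$), and then closes the second implication and $\overline c=\underline c+\epsilon$ by pairing this with Proposition~\ref{prop:max_gap}. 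The paper never introduces anything like your $(\ast\ast)$; it implicitly treats the relevant roads of $\mathcal I$ — in particular every $i'\in\sharedroads^{z^*}_j$ and the road attaining $\overline c$ — as carrying positive flow under $\hz$, so that Proposition~\ref{prop:max_gap} supplies the matching upper bound $c^{\bar\tau}_{i'}(\hz_{i'})\le\underline c+\epsilon$. You have correctly identified that this leaves a case unaddressed (a road of $\mathcal I$ abandoned by $\hz$ is invisible to Proposition~\ref{prop:max_gap}), and your reduction of all remaining claims to the single inequality $\underline c\ge\mu-\epsilon$ is a clean way to isolate exactly what is missing.

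The problem is that your proposal stops precisely there. $(\ast\ast)$ is the substantive content of your proof, and Step~3 offers only a direction (process the tree $G(z^*)$ from the leaves inward and propagate a bound on $\langle a_\ell,\hz_\ell-z^*_\ell\rangle$) without carrying it out; as you yourself observe, the weighted loads are not conserved from road to road, which is exactly why it is not evident that such an induction closes, and Corollary~\ref{cor:mixed} alone does not obviously control how a deficit on one road redistributes across the several other roads its types may use. Note also that you cannot shortcut this by citing Lemma~\ref{lma:std_cost_greater_flow} (which would give $\underline c\ge\mu$ immediately, since the witnessing road $i$ lies in $\mathcal I_1$): that lemma is proved downstream of the present one, via \eqref{eq:standard_cost_partitions}, so invoking it here would be circular. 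As written, then, the proposal is an incomplete proof: you must either supply the tree argument for $(\ast\ast)$ in full, or argue directly that every road of $\sharedroads^{z^*}_j$ and the road attaining $\overline c$ remain loaded under $\hz$ — but that claim, too, is currently unproven.
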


Since this logic applies to all roads which have positive flow of a vehicle type in $\hat{z}$ that did not have it in $z^*$, we can construct a set of roads which contains all roads with positive flow in $\hat{z}$ that did not have positive flow of those types in $z^*$. This set will contain these roads and possibly other roads as well. We partition $\mathcal{I}$, the roads which can have positive flow in an equilibrium, into three partitions: $\mathcal{I}_1$, $\mathcal{I}_2$, and $\mathcal{I}_3$. We define first two iteratively, and we define the third to be the roads not included in the first two.

\begin{definition}\label{def:partitions}
Let $\mathcal{I}_1$ denote roads which have positive flow of vehicle types in $\hat{z}$ that were not there in $z^*$, and iteratively add in all roads which share common vehicle types (relative to $z^*$) with those roads. We define $\mathcal{I}_2$ iteratively as well, starting with roads which had positive flow in $z^*$ of those vehicle types now in $\mathcal{I}_1$, and adding all roads which share common vehicle types (relative to $z^*$) with those roads. We define $\mathcal{I}_3$ as all remaining roads. To formulate this mathematically, first define a helper function which takes in a set of roads and outputs the union of that set and all roads that share vehicle types with it in $z^*$:
\begin{equation}
	\Phi(A) = A \cup \{ i \in {[}n{]} \; | \; (\exists j \in {[}m{]}, i' \in A){[}z^{*j}_{i'}>0 \land z^{*j}_{i}>0  {]}  \} \; .
\end{equation}

Then,
\begin{equation}\label{eq:partitions_definition}
\begin{aligned}
	&\mathcal{I}_1 = \Phi^n( \{ i \in \mathcal{I} \; | \; (\exists j \in {[}m{]}) {[} \hat{z}^j_i > 0 \land z^{*j}_i = 0  {]}  \} ) \\
	&\mathcal{I}_2 =\Phi^n( \{ i' \in \mathcal{I} \; | \; (\exists j \in {[}m{]}, i \in {[}n{]}) \\
	&\qquad \qquad {[} \hat{z}^j_{i} > 0 \land z^{*j}_{i} = 0 \land i' \in \mathcal{N}^{z^*}_{j} {]}  \} ) \\
	&\mathcal{I}_3 = \mathcal{I} \setminus (\mathcal{I}_1 \cup \mathcal{I}_2) \; .
\end{aligned}
\end{equation}
\end{definition}

The construction of $\mathcal{I}_1$ and $\mathcal{I}_2$ to have roads with continuously overlapping vehicles types with respect to $z^*$, implying that all its roads have the same standard cost. Using this and Lemma~\ref{lma:relative_costs} we can state the following:
\begin{equation}\label{eq:standard_cost_partitions}
\begin{aligned}
	&\forall i \in \mathcal{I}_1  : \; c^{\overline{\tau}}_i(\hat{z}_i) = \underline{c} \\
	&\forall i \in \mathcal{I}_2  : \; c^{\overline{\tau}}_i(\hat{z}_i) = \overline{c} = \underline{c} + \epsilon  \\
	& \forall i \in \mathcal{I}_3  : \; \underline{c} \le c^{\overline{\tau}}_i(\hat{z}_i) \le \overline{c}  \; .
\end{aligned}
\end{equation}

If \eqref{eq:standard_cost_partitions} and \eqref{eq:partitions_definition} share any roads, then $\underline{c} = \underline{c} + \epsilon$, yielding a contradiction and proving the theorem. Otherwise, these three sets are partitions of $\mathcal{I}$ and do not share any roads in common with each other. Moreover, these roads partition the vehicle types in $z^*$, meaning that in $z^*$, there are no vehicle types which have positive flow on more than one of the sets $\mathcal{I}_1$, $\mathcal{I}_2$, or $\mathcal{I}_3$.

Moreover, as an implication of the definition, all roads in $\mathcal{I}_3$ have $\hat{z}^j_i = z^{*j}_i$. Accordingly, by the definition of the tolls,
\begin{equation}
	\forall i \in \mathcal{I}_3 : \; c^{\overline{\tau}}_i(\hat{z}_i) = \mu  \; .
\end{equation}

We next relate $\underline{c}$ and $\overline{c}$ to $\mu$. First we deal with $\mathcal{I}_1$, which has $c^{\overline{\tau}}_i(\hat{z}_i) = \underline{c}$. 

\begin{lemma}\label{lma:std_cost_greater_flow}
	For any road $i$ in partition $\mathcal{I}_1$, constructed as in Definition~\ref{def:partitions}, the standard cost on $i$ will be at least as great in $\hat{z}$ as it is in $z^*$. Mathematically,
	\begin{equation}
		(\forall i \in \mathcal{I}_1)[c^{\overline{\tau}}_i(\hat{z}_i) \ge c^{\overline{\tau}}_i(z^*_i)]
	\end{equation}
\end{lemma}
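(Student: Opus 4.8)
The plan is to argue by contradiction. Suppose that some road $i \in \mathcal{I}_1$ has strictly smaller standard cost under $\hz$ than under $\prevflow$, i.e.\ $c^{\overline{\tau}}_i(\hz_i) < c^{\overline{\tau}}_i(\prevflow_i)$. Since standard cost is $c_i(z_i) + \mu - c_i(\prevflow^*_i)$ and $c_i$ is strictly increasing in each vehicle flow (Assumption~\ref{asmp:increasing_cost}), this is equivalent to $c_i(\hz_i) < c_i(\prevflow_i)$, which forces $\sum_{j} \hz^j_i < \sum_{j} \prevflow^j_i$: the total flow on road $i$ strictly decreased. The idea is then to apply this observation to \emph{every} road in $\mathcal{I}_1$ and to trace where that missing flow went.

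First I would note that, by \eqref{eq:standard_cost_partitions}, all roads in $\mathcal{I}_1$ have the common standard cost $\underline{c}$ under $\hz$, and all roads in $\mathcal{I}_1$ already shared this common standard cost under $\prevflow$ as well (this is the content of the "continuously overlapping vehicle types" construction together with the original equilibrium conditions defining $\prevflow$, which also balances standard costs within the connected block). Hence if one road of $\mathcal{I}_1$ has $c^{\overline{\tau}}_i(\hz_i) < c^{\overline{\tau}}_i(\prevflow_i)$, then \emph{all} roads of $\mathcal{I}_1$ do, so total flow strictly decreased on every road of $\mathcal{I}_1$, hence the aggregate flow over $\mathcal{I}_1$ strictly decreased. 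Now I would account for this lost flow by vehicle type. By the partition property established just before the lemma, the vehicle types with positive flow (in $\prevflow$) on roads of $\mathcal{I}_1$ are confined to $\mathcal{I}_1$ under $\prevflow$; and by Proposition~\ref{prop:toll_structure} no equilibrium uses roads outside $\mathcal{I} = \mathcal{I}_1 \cup \mathcal{I}_2 \cup \mathcal{I}_3$. So any flow of such a type that leaves $\mathcal{I}_1$ under $\hz$ must appear on a road of $\mathcal{I}_2$ or $\mathcal{I}_3$ — but a road of $\mathcal{I}_2 \cup \mathcal{I}_3$ carrying positive flow of a type native to $\mathcal{I}_1$ is, by Definition~\ref{def:partitions} (the $\Phi$-closure absorbing roads that share a $z^*$-type), impossible unless that road is itself in $\mathcal{I}_1$. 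Thus the total demand of the $\mathcal{I}_1$-native types must be entirely re-routed within $\mathcal{I}_1$; conservation of flow demand then says the aggregate flow over $\mathcal{I}_1$ cannot decrease, contradicting the strict decrease above.

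The one subtlety — and the step I expect to be the main obstacle — is handling vehicle types that appear on $\mathcal{I}_1$ roads in $\hz$ but were \emph{not} native to $\mathcal{I}_1$ in $\prevflow$; these are exactly the types driving the construction of $\mathcal{I}_1$ in the first place. Such a type could import flow into $\mathcal{I}_1$, potentially offsetting the lost native flow. Here I would invoke Lemma~\ref{lma:relative_costs}: any road carrying such a "foreign" type in $\hz$ has standard cost $\underline{c}$, while every road that carried that same type in $\prevflow$ has standard cost $\overline{c} = \underline{c} + \epsilon$. But the roads that carried that type in $\prevflow$ lie in $\mathcal{I}_2$ by construction, so that type's demand is being shifted \emph{out of} $\mathcal{I}_2$ and \emph{into} $\mathcal{I}_1$ — i.e.\ these foreign types bring \emph{additional} flow onto $\mathcal{I}_1$, which only strengthens the contradiction: the aggregate flow over $\mathcal{I}_1$ under $\hz$ is at least the original native demand minus nothing plus the imported foreign flow, hence strictly exceeds its value under $\prevflow$ if any road lost total flow, which is impossible. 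Carefully bookkeeping these two opposing flows (native types that can only stay in $\mathcal{I}_1$, foreign types that only add to $\mathcal{I}_1$) and concluding that aggregate $\mathcal{I}_1$ flow is nondecreasing — hence so is the standard cost on at least one, and therefore every, road of $\mathcal{I}_1$ — is the crux of the argument.
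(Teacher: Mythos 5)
Your argument has a genuine gap at its very first step. You claim that $c^{\overline{\tau}}_i(\hz_i) < c^{\overline{\tau}}_i(\sz_i)$, equivalently $c_i(\hz_i) < c_i(\sz_i)$, ``forces $\sum_j \hz^j_i < \sum_j \sz^j_i$.'' That implication is false in this model: the latency is $c_i(z_i) = \sum_j a^j_i z^j_i + b_i$, and the coefficients $a^j_i$ differ across types (this asymmetry is the entire point of the mixed-autonomy setting). A road's latency can strictly decrease while its total flow strictly increases if the composition shifts toward less-congesting types --- e.g.\ $a^1_i=2$, $a^2_i=1$, $\sz_i=(1,0)$ versus $\hz_i=(0,1.5)$ gives lower latency with more total flow. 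Since the roads of $\mathcal{I}_1$ are precisely those absorbing ``foreign'' types, this is not a corner case but the situation the lemma must handle. With that step gone, your contradiction collapses: all that pure flow accounting gives you is that per-type totals over $\mathcal{I}_1$ are conserved for native types and augmented by foreign types (which you correctly establish, and which matches the first display of the paper's proof), but that is consistent with every individual road's latency dropping, because types can also redistribute among the roads of $\mathcal{I}_1$ toward roads where their $a^j_i$ is smaller. Flow conservation alone cannot close the argument.

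The paper closes it differently: it uses the \emph{optimality} of $\sz$. Since the native demand on $\mathcal{I}_1$ is conserved and the latencies are nondecreasing in the flows, if every road in $\mathcal{I}_1$ had standard cost strictly below $\mu$ under $\hz$ (i.e.\ $c_i(\hz_i) < c_i(\sz_i)$ for all $i \in \mathcal{I}_1$), then the native demand could be rerouted within $\mathcal{I}_1$ at strictly lower latencies than in $\sz$, yielding a feasible routing with strictly smaller social cost and contradicting $\sz \in \mathcal{Z}^*$. If you want to repair your write-up, replace the ``lower latency implies lower total flow'' step with this appeal to the optimality of $\sz$; the flow bookkeeping you did is then the correct supporting ingredient rather than the engine of the contradiction.
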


We now deal with the roads in $\mathcal{I}_2$. 

\begin{lemma}\label{lma:std_cost_lesser_flow}
	For any road $i$ in partition $\mathcal{I}_2$, constructed as in Definition~\ref{def:partitions}, the standard cost on $i$ will not be greater in $\hat{z}$ as it is in $z^*$. Mathematically,
	\begin{equation}
		(\forall i \in \mathcal{I}_2)[c^{\overline{\tau}}_i(\hat{z}_i) \le c^{\overline{\tau}}_i(z^*_i)]
	\end{equation}
\end{lemma}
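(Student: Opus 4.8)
\emph{The plan} is to reduce the lemma to exhibiting a single road on which the cost does not increase from $\prevflow$ to $\hat z$, and to prove that by a leaf-peeling induction on the acyclic graph $G(\prevflow)$ restricted to $\mathcal{I}_2$. For the reduction: by \eqref{eq:min_cost}, $c^{\overline{\tau}}_i(\prevflow_i)=\mu$ for every $i\in\mathcal{I}$, so the asserted inequality is the same as $c_i(\hat z_i)\le c_i(\prevflow_i)$; and by \eqref{eq:standard_cost_partitions} all roads of $\mathcal{I}_2$ share the same standard cost $\overline{c}$ under $\hat z$, so $c_i(\hat z_i)-c_i(\prevflow_i)=\overline{c}-\mu$ is one and the same number for every $i\in\mathcal{I}_2$. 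Hence it suffices to find one road of $\mathcal{I}_2$ with $c_i(\hat z_i)\le c_i(\prevflow_i)$; equivalently, to rule out $c_i(\hat z_i)>c_i(\prevflow_i)$ holding simultaneously on all of $\mathcal{I}_2$ (whereupon the conclusion holds on \emph{every} road of $\mathcal{I}_2$).

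\emph{Setup on a component.} Fix any connected component $C$ of $G(\prevflow)$ restricted to the roads $\mathcal{I}_2$; let $R_C\subseteq\mathcal{I}_2$ be its roads, $T_C$ its vehicle types, and $\beta_j:=\sum_{i\in[n]}\prevflow^j_i$ the total demand of type $j$. I will use two facts. First, for every $i\in\mathcal{I}_2$ the support of $\hat z_i$ is contained in that of $\prevflow_i$, because $i\notin\mathcal{I}_1$ means $i$ is not a migrating road. Second, for each $j\in T_C$: all of $j$'s $\prevflow$-flow lies in $R_C$ (by the partition of the $\prevflow$-types among $\mathcal{I}_1,\mathcal{I}_2,\mathcal{I}_3$ and connectedness of $C$), while by Proposition~\ref{prop:toll_structure}, the definition of $\mathcal{I}_1$, and the fact that roads of $\mathcal{I}_3$ carry identical flow in $\hat z$ and $\prevflow$, all of $j$'s $\hat z$-flow lies on $R_C\cup\mathcal{I}_1$; consequently $\sum_{i\in R_C}\hat z^j_i\le\beta_j=\sum_{i\in R_C}\prevflow^j_i$. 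Since $G(\prevflow)$ is acyclic by Theorem~\ref{thm:routing} (equivalently Corollary~\ref{cor:mixed}), $C$ is a bipartite tree.

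\emph{Leaf peeling.} Suppose, toward a contradiction, that $\sum_{j\in T_C}a^j_i(\hat z^j_i-\prevflow^j_i)>0$ for every road $i\in R_C$; I induct on the number of nodes of $C$. If $C$ is a single edge $\{i,j\}$, this inequality forces $\hat z^j_i>\prevflow^j_i=\beta_j$, contradicting the second fact. Otherwise pick a leaf of $C$. If the leaf is a type node $j_0$ with unique incident road $i_0$, then $\hat z^{j_0}_{i_0}\le\beta_{j_0}=\prevflow^{j_0}_{i_0}$, so deleting $j_0$ only adds the nonnegative quantity $a^{j_0}_{i_0}(\prevflow^{j_0}_{i_0}-\hat z^{j_0}_{i_0})$ to the left side of $i_0$'s inequality and leaves the others and both facts intact; apply the inductive hypothesis to the smaller tree. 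If the leaf is a road node $i_0$ with unique incident type $j_0$, then $i_0$'s inequality gives $\hat z^{j_0}_{i_0}>\prevflow^{j_0}_{i_0}$, so deleting $i_0$ and lowering $j_0$'s demand to $\beta_{j_0}-\prevflow^{j_0}_{i_0}$ keeps both facts valid --- note $\sum_{i\neq i_0}\hat z^{j_0}_i\le\beta_{j_0}-\hat z^{j_0}_{i_0}<\beta_{j_0}-\prevflow^{j_0}_{i_0}$ --- and leaves every surviving road inequality in force; again apply the inductive hypothesis. The contradiction yields a road $i^{\star}\in R_C\subseteq\mathcal{I}_2$ with $c_{i^{\star}}(\hat z_{i^{\star}})\le c_{i^{\star}}(\prevflow_{i^{\star}})$, so $\overline{c}\le\mu$, which is the claim.

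\emph{The hard part} is the leaf-peeling step. Since a road's cost is the weighted sum $\langle a_i,z_i\rangle$ with type-dependent coefficients, the easy observation that the \emph{total} flow carried by $\mathcal{I}_2$ strictly decreases from $\prevflow$ to $\hat z$ does not by itself produce a road whose (weighted) cost decreases; acyclicity of $G(\prevflow)$ is precisely what lets flow conservation be pushed through the tree one leaf at a time. Lemma~\ref{lma:std_cost_greater_flow} for $\mathcal{I}_1$ will be handled by the mirror-image argument, tracking the flow that migrates \emph{onto} $\mathcal{I}_1$.
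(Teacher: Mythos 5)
Your proof is correct, but it takes a genuinely different route from the paper's. The paper's argument is short and structural: it observes that on $\mathcal{I}_2$ each road carries only the types it carried in $\prevflow$ and each such type's demand has weakly decreased (some of it having migrated to $\mathcal{I}_1$), invokes the uniqueness statement of Lemma~\ref{lma:same_roads_unique} for equilibria supported on the $\prevflow$-roads, and then asserts that the (common) equilibrium standard cost is monotone nonincreasing in the demand, which gives $c^{\overline{\tau}}_i(\hz_i)\le\mu$. That monotonicity step is stated in one line and not really justified --- it is exactly the kind of claim that can fail for multi-type costs with type-dependent coefficients $a^j_i$ unless one exploits the acyclic structure. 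Your leaf-peeling induction on the bipartite tree $G(\prevflow)\vert_{\mathcal{I}_2}$ proves precisely the needed inequality directly from flow conservation, bypassing both Lemma~\ref{lma:same_roads_unique} and the monotonicity assertion; your reduction via \eqref{eq:min_cost} and \eqref{eq:standard_cost_partitions} (all roads of $\mathcal{I}_2$ share standard cost $\overline{c}$, so one road with $c_i(\hz_i)\le c_i(\prevflow_i)$ suffices) is also sound and matches how the surrounding proof uses these facts. The trade-off: the paper's proof is shorter and reuses machinery already needed for Case~2, while yours is longer but self-contained and fills in the monotonicity gap explicitly; your supporting facts (support containment on $\mathcal{I}_2$ from $\mathcal{I}_1\cap\mathcal{I}_2=\emptyset$, and $\sum_{i\in R_C}\hz^j_i\le\beta_j=\sum_{i\in R_C}\prevflow^j_i$ from the type partition and Proposition~\ref{prop:toll_structure}) are all consistent with what the main proof of Theorem~\ref{thm:almost_anonymous} establishes before this lemma is invoked, so there is no circularity.
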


However, there is a contradiction between Lemmas~\ref{lma:std_cost_greater_flow} and \ref{lma:std_cost_lesser_flow} (combined with the fact that in $c^{\overline{\tau}}_i(z^*_i)=\mu$ for all roads $i$ in $\mathcal{I}$) with \eqref{eq:standard_cost_partitions}, proving Case~\ref{cases:case3}. With all three cases handled, the theorem is proved.

\end{proof}

The theorem above provides tolls which ensure that any equilibrium will minimize the social cost of routing. Importantly, the tolls for each vehicle type differ by at most $\epsilon$, which must be greater than zero, but can be arbitrarily small. A system designer can consider different incentive methods to create this small differentiation. Some possible methods include actually differentiated tolls, randomly providing gifts to vehicles of some types to differentiate the tolls on expectation, or even pro-social public messaging. This small differentiation addresses many of the problems with non-anonymous tolls, including fairness and privacy.

\section{VARIABLE MARGINAL COST TOLLS}
\label{sct:marginal_cost}
The tolls provided in this paper have been \emph{fixed cost} tolls, meaning that the tolls are calculated based on the network structure and overall flow demand, but the toll on a link does not vary with the flow on that link. However, previous works have shown that fixed tolls are not \emph{strongly robust} to mischaracterizations of latency functions in a network, meaning that if the information used to calculate the tolls is not accurate, the tolls do not necessarily incentivize optimal behavior \cite{brown2017studies}. Moreover, the tolls provided depend on knowledge of the entire network and the flow demand of all vehicle types. A class of tolls which alleviates this latter requirement is \emph{variable marginal cost tolls}, in which the toll provided to vehicles of type $j$ on road $i$ is $$\tau_i^j(z_i) = (\sum_{j \in {[}m{]}}z^j_i)\frac{\partial}{\partial z^j_i}c_i(z_i) \; .$$

Assuming, as we do, that all vehicle types experience latency identically, if the social cost function is convex, then variable marginal cost tolling ensure that any resulting equilibrium has the social cost of the socially optimal routing\footnote{This is because the tolls align the Wardrop Equilibrium conditions and the conditions for first-order optimality of the social cost.} \cite{sandholm2005negative}. However, in our setting the assumption of convex social cost function does not hold. It is therefore worth investigating whether marginal cost tolls are optimal in the setting of mixed autonomy; the following proposition shows that they are not.

\begin{proposition}\label{prop:marginal_cost}
Consider the setting of multitype congestion games with affine cost functions. The Price of Anarchy, with variable marginal cost tolls applied, is lower bounded by $k/(2 \sqrt{k} -1)$, where $k$ is the maximum degree of asymmetry.
\end{proposition}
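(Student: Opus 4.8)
The plan is to exhibit a small parallel network with two vehicle types, apply variable marginal cost tolls, compute the worst-case Wardrop Equilibrium under those tolls, and compare its social cost against the socially optimal cost, choosing parameters so the ratio is $k/(2\sqrt{k}-1)$. The natural candidate is precisely the network of Figure~\ref{fig:example_networks}~(b), which is already calibrated to give an untolled PoA scaling like $\sqrt{k}$ and in which the tolled-but-anonymous analysis was carried out: take road~1 with constant latency $c_1=1$ and road~2 with latency $\tfrac{k}{\sqrt{k}+1}z^1_2 + \tfrac{1}{\sqrt{k}+1}z^2_2$, with demands $1/\sqrt{k}$ of type~1 and $1$ of type~2. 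First I would write down the marginal cost tolls explicitly: on road~1 the latency is flow-independent so $\tau^j_1=0$, and on road~2, since $c_2(z_2)$ depends on $z^j_2$ with derivative $a^j_2$ (i.e.\ $a^1_2=\tfrac{k}{\sqrt{k}+1}$, $a^2_2=\tfrac{1}{\sqrt{k}+1}$), the toll on type~$j$ is $\tau^j_2(z_2)=(z^1_2+z^2_2)\,a^j_2$. Note these tolls are differentiated and vary with the flow, so this is genuinely the marginal-cost scheme, not the fixed tolls of earlier sections.

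Next I would characterize the Wardrop Equilibria of the tolled game and identify the worst one. The tolled cost seen by a type-$j$ user on road~2 is $c^{\tau_j}_2(z_2)=a^1_2 z^1_2 + a^2_2 z^2_2 + (z^1_2+z^2_2)a^j_2$, while on road~1 it is $1$ for both types. The key computation is to solve the equilibrium conditions for each type, splitting into the relevant cases (all type-$j$ flow on one road, or split between the two). I expect the worst-case equilibrium to have all of type~1 on road~2 and type~2 split, or some similar configuration that maximizes $C(\hat z) = 1\cdot(\text{flow on road 1}) + (\text{latency on road 2})\cdot(\text{flow on road 2})$ — critically, the social cost counts only the latency, not the tolls, so an equilibrium that packs flow onto road~2 inflates $C$ even though each user's tolled cost is balanced. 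I would then compute $C(\hat z)$ in that configuration as a function of $k$.

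For the denominator of the ratio, I would compute the socially optimal cost $C(z^*)$: as noted in the Example, the optimal routing puts all of type~1 on road~1 (wait — it is the reverse in the Example: type~1 on the \emph{top} road, which is road~1 with latency $1$) giving cost $\tfrac{1}{\sqrt k}\cdot 1$ from type~1 plus type~2 alone on road~2 giving $a^2_2\cdot 1 = \tfrac{1}{\sqrt k + 1}$, so $C(z^*)=\tfrac{1}{\sqrt k}+\tfrac{1}{\sqrt k +1}$. Simplifying this yields $\tfrac{2\sqrt k + 1}{\sqrt k(\sqrt k +1)}$. Then the bound $\mathrm{PoA}\ge C(\hat z)/C(z^*)$ should reduce to $k/(2\sqrt k - 1)$ after algebra; if the first guess for $\hat z$ does not give exactly this, I would adjust the equilibrium configuration (the problem allows multiple equilibria under these tolls, and we want the supremum) or slightly rescale the demands.

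The main obstacle I anticipate is pinning down the \emph{worst-case} equilibrium rather than just \emph{an} equilibrium: variable marginal cost tolls can admit several Wardrop Equilibria here because the tolled costs are no longer from a convex potential in the mixed-autonomy setting, and the PoA lower bound requires the one with largest social cost. I would handle this by enumerating the finitely many support patterns (which roads each type uses), solving the linear equilibrium equations in each, checking feasibility and the inequality conditions of Definition~\ref{def:NE}, and taking the maximum $C$ over the valid ones. A secondary, purely mechanical obstacle is the algebraic simplification to land exactly on $k/(2\sqrt{k}-1)$; I would carry the $\sqrt{k}$ substitution $u=\sqrt k$ throughout to keep expressions rational in $u$, then back-substitute.
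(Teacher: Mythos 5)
Your overall strategy (exhibit an instance, apply the marginal-cost tolls, find a bad Wardrop Equilibrium, and compare its social cost against a good feasible routing) is the right shape and matches the paper's in spirit, but the instance you chose does not work, and the proposal is missing the one idea that makes the paper's instance work. In the network of Figure~\ref{fig:example_networks}~(b) with demands $1/\sqrt{k}$ and $1$, the marginal-cost tolls in fact induce the \emph{optimal} routing as the unique equilibrium. Writing $x=z^1_2$, $y=z^2_2$, the tolled cost of type $1$ on road $2$ is $\frac{(kx+y)+k(x+y)}{\sqrt{k}+1}$ and of type $2$ is $\frac{(kx+y)+(x+y)}{\sqrt{k}+1}$, against a constant cost of $1$ on road $1$. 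At ``all flow on road $2$'' the type-$1$ tolled cost is $1+\sqrt{k}>1$; at ``type $1$ on road $2$, type $2$ on road $1$'' it is $2\sqrt{k}/(\sqrt{k}+1)>1$; an interior split of both types forces $(k-1)(x+y)=0$; and the remaining support patterns fail similarly for $k>1$. The only configuration satisfying Definition~\ref{def:NE} is $x=0$, $y=1$, which is exactly the optimum, so this instance certifies a PoA lower bound of $1$, not $k/(2\sqrt{k}-1)$. Rescaling demands is unlikely to rescue it, because road $1$ is flow-independent (hence untolled) and the constant/congestible pair does not produce the pathology you need.

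The missing idea is the one the paper builds its example around: since the social cost $C$ is non-convex in mixed autonomy, it can have a \emph{saddle point} inside the feasible region, and marginal-cost tolls turn every stationary point of $C$ into a Wardrop Equilibrium, not only minimizers. The paper therefore designs a two-road network (Figure~\ref{fig:marginal_cost_proof}) with \emph{both} roads congestible and road-$1$ coefficients $\frac{3k-1}{2\zeta}$ and $\frac{k+1}{2\zeta}$ tuned so that the swapped routing $(z^1_1,z^2_1,z^1_2,z^2_2)=(0,\zeta,1,0)$ equalizes the tolled costs of both types across both roads (both roads cost $2k$ for type $1$ and $k+1$ for type $2$), making it an equilibrium while being a saddle of $C$. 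Comparing against the feasible routing $(1,0,0,\zeta)$ and setting $\zeta=\sqrt{k}$ yields $k/(2\sqrt{k}-1)$. So the gap is concrete: you must engineer an instance with a feasible, far-from-optimal stationary point of the social cost; the anonymous-tolling example has none, and your fallback of enumerating support patterns would reveal this but would not tell you how to construct an instance that works.
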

\begin{proof}

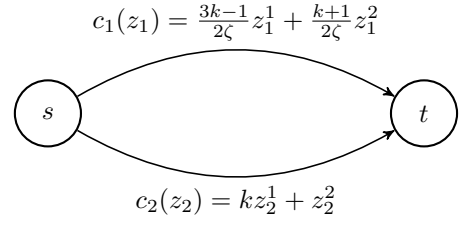
\begin{figure}
	\centering
	\begin{tikzpicture}[baseline={([yshift={-\ht\strutbox}]current bounding box.north)}, ->, >=stealth', auto, semithick, node distance=5cm]
			\tikzstyle{every state}=[fill=white,draw=black,thick,text=black,scale=1]
			\node[state]    (0)               {$s$};
			\node[state]    (1)[right of=0]   {$t$};
			\path
			(0) edge[bend left]		node[above]{$c_1(z_1) =  \frac{3k-1}{2\zeta} z^1_1 + \frac{k+1}{2\zeta} z^2_1$}     (1)
			(0) edge[bend right]	node[below]{$c_2(z_2) = k z^1_2 + z^2_2$}     (1);
	\end{tikzpicture}
	\caption{Examples proving Proposition~\ref{prop:marginal_cost}. Consider $1$ unit flow demand of vehicle type $1$ and $\zeta$ units flow demand of vehicles type $2$.}
	\label{fig:marginal_cost_proof}
\end{figure}

We prove this proposition by example. Consider the network shown in Figure~\ref{fig:marginal_cost_proof}. Consider $1$ unit flow demand of vehicle type $1$ and $\zeta$ units flow demand of vehicles type $2$.

The form of the marginal cost tolls make it such that the Wardrop Equilibrium conditions are satisfied when first-order optimality conditions are satisfied. We will therefore lower bound the Price of Anarchy by taking the ratio of the social cost at a saddle point of the social cost function to the social cost at another feasible routing. Since the socially optimal routing will not have greater social cost than this latter term, this will be a lower bound on the PoA.

With the parameters chosen above, a saddle point of the social cost function (and therefore a Wardrop Equilibrium with tolls applied) exists at $z^1_1=0$, $z^2_1=\zeta$, $z^1_2=1$, $z^2_2=0$. Let us use $\hat{z}$ to denote this routing. Let us use $\tilde{z}$ to denote another routing, $z^1_1=1$, $z^2_1=0$, $z^1_2=0$, $z^2_2=\zeta$. Then, for any socially optimal routing $z^*$,
\begin{equation*}
\frac{C(\hat{z})}{C(z^*)} \ge \frac{C(\hat{z})}{C(\tilde{z})} = \frac{(k+1)\zeta^2 +2k \zeta  }{2\zeta^3 + 3 k -1 } \; .
\end{equation*}
If we choose $\zeta = \sqrt{k}$, we find
\begin{equation*}
\frac{C(\hat{z})}{C(z^*)} \ge \frac{k }{2 \sqrt{k}-1} \; ,
\end{equation*}
meaning that we have a lower bound on the Price of Anarchy which is unbounded with increasing $k$.
\end{proof}

This proposition shows that marginal cost tolls do not necessarily induce optimal equilibria in the setting of mixed autonomy. Moreover, we showed that a lower bound on the Price of Anarchy with variable marginal cost tolls applied scales with $\sqrt{k}$, where $k$ is the maximum degree of asymmetry.

\section{CONCLUSION AND FUTURE WORK}
\label{sct:conclusion}
In this work we explored the role of differentiation in tolling for traffic networks with mixed autonomy. We show via a clear proof mechanism that our provided differentiated tolls yield socially optimal routing. We extend previous Price of Anarchy results to the setting of more than two vehicle types, provide anonymous tolls with performance bounds, and compare these bounds to the bounds on untolled networks and networks tolled with anonymous tolls. We then provided tolls which are only infinitesimally differentiated, but in parallel networks can induce an optimal routing. Finally, we quantified the limitations of variable marginal cost tolling in our setting.

There are a number of important directions for continuing this line of work. There is a gap between the upper bound performance guarantees of our anonymous tolls and the lower bounds that are established via example. Aside from infinitesimally differentiated tolling, there are other ways to achieve some of the goals of anonymous tolling without sacrificing network performance. Specifically, one can consider a scheme in which different vehicle classes are clustered together for the purposes of tolling, and a system designer wishes to determine the optimal clustering and tolling. Continuing this direction holds promise for understanding the tradeoffs in choosing tolling granularity in the presence of many different vehicle types, and choosing a toll method that can be efficiently and fairly implemented, while preserving traffic network performance efficiency.


\bibliographystyle{IEEEtran}
\bibliography{lazar.bib}

\section{APPENDIX}
\label{sct:appendix}
We will first present additional lemmas and a proposition to aid the proof of the theorems. We will then prove the theorems and lemmas.

\subsection{Lemmas and Proposition}
\begin{lemma}\label{lma:poa1}
	For any feasible equilibrium routing $\hz$ and optimal routing $\sz$ in an untolled network with maximum degree of asymmetry $k<4$,
	$$C(\hz) \le 4/(4-k) C(\sz) \; .$$
\end{lemma}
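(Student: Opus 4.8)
The plan is to combine the Variational Inequality characterization of the untolled equilibrium with a per-road ``smoothness''-type estimate, in the spirit of \cite{correa2008geometric} and the manipulation used for Theorem~\ref{thm:differentiated}. Since $\hz$ is an equilibrium of the untolled network (so $c^\tau = c$), taking $z' = \sz$ in the Variational Inequality gives $\langle c(\hz), \hz - \sz\rangle \le 0$, hence
$$C(\hz) = \langle c(\hz), \hz\rangle \le \langle c(\hz), \sz\rangle = \sum_{i \in [n]} \langle c_i(\hz_i), \sz_i\rangle \;.$$
Because every vehicle type on road $i$ experiences the common latency $\ell_i(z_i) = a_i^T z_i + b_i$, each inner product factors as $\langle c_i(\hz_i), \sz_i\rangle = \ell_i(\hz_i)\,(\mathbf{1}^T\sz_i)$, where $\mathbf{1}^T\sz_i$ is the total flow of all types on road $i$; likewise $C(\hz) = \sum_i \ell_i(\hz_i)(\mathbf{1}^T\hz_i)$ and $C(\sz) = \sum_i \ell_i(\sz_i)(\mathbf{1}^T\sz_i)$.

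The core step is the following per-road inequality: for any $\hz_i, \sz_i \in \mathbb{R}^m_{\ge 0}$,
$$\ell_i(\hz_i)\,(\mathbf{1}^T\sz_i) \;\le\; \tfrac{k}{4}\,\ell_i(\hz_i)\,(\mathbf{1}^T\hz_i) \;+\; \ell_i(\sz_i)\,(\mathbf{1}^T\sz_i) \;.$$
To prove it, put $Y = \mathbf{1}^T\hz_i$, $W = \mathbf{1}^T\sz_i$, $\gamma = a_i^T\hz_i$, $a_i^{\min} = \min_j a_i^j$, $a_i^{\max} = \max_j a_i^j$. If $a_i^{\min} = 0$, then finiteness of $k$ forces $a_i = 0$ and the inequality is immediate since $\ell_i \equiv b_i$; otherwise use $a_i^T\sz_i \ge a_i^{\min} W$ together with $Y \ge \gamma/a_i^{\max} \ge \gamma/(k\,a_i^{\min})$ to reduce the claim (after dropping the nonnegative terms carrying $b_i$) to $\tfrac{\gamma^2}{4 a_i^{\min}} + a_i^{\min} W^2 - \gamma W \ge 0$, which holds because the left-hand side is exactly $a_i^{\min}\big(W - \tfrac{\gamma}{2 a_i^{\min}}\big)^2$.

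Summing this over $i \in [n]$ and using the factorizations above yields $\langle c(\hz), \sz\rangle \le \tfrac{k}{4} C(\hz) + C(\sz)$, which combined with the Variational Inequality bound gives $C(\hz) \le \tfrac{k}{4} C(\hz) + C(\sz)$. Since $k < 4$ we have $1 - k/4 > 0$, so rearranging gives $C(\hz) \le \tfrac{4}{4-k} C(\sz)$. The main obstacle is the per-road inequality: selecting the right coefficient pair $(\lambda,\mu) = (1, k/4)$, using the asymmetry bound $a_i^{\max} \le k\,a_i^{\min}$ to trade $a_i^T\hz_i$ for a multiple of the total flow $\mathbf{1}^T\hz_i$, recognizing the residual as a perfect square, and separately disposing of the degenerate roads with $a_i = 0$; the remaining inner-product bookkeeping is routine thanks to all vehicle types on a road sharing the same latency.
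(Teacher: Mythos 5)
Your proof is correct and is essentially the paper's argument: both rest on the Variational Inequality plus the per-road $\beta$-smoothness estimate of \cite{correa2008geometric}, with your coefficient $k/4$ obtained from $a_i^{\max}\le k\,a_i^{\min}$ and a completed square being exactly the paper's bound $\beta(\mathcal{C}_k)\le k/4$ obtained by maximizing $z(\hat z - z)/\hat z^2$. The only difference is that you derive the $1/(1-\beta)$ step inline from the Variational Inequality, whereas the paper imports it as Lemma~1 of \cite{lazar2020routing}.
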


For the next proposition and lemmas, we define the following functions. Consider an original network with latency function $c$, and some specific equilibrium flow for this network, $\hz$. We first define a flow aggregator function which combines the flow in $\hz$ of each type on a road. We define $\hf : \mathbb{R}^m_{\ge 0} \rightarrow \mathbb{R}_{\ge 0}$, where
\begin{align}\label{eq:hf}
	\hf_i(\hz_i) = \sum_{j \in {[}m{]} } \hz^j_i \; .
\end{align}

We next define a latency function parameterized by another latency function and a flow. First define an intermediary function $v_i^{j'} : \mathbb{R}^m_{\ge 0} \rightarrow \mathbb{R}_{\ge 0}$ such that
\begin{align}\label{eq:vj}
	v^{j'}_i(\hz_i) = \begin{cases}
	0 & j = 0 \\
	\sum_{j \in {[}j'{]}} \hz^j_i & j' \in {[}m-1{]} \\
	\infty & j = m
	\end{cases}
\end{align}

We then define a new latency function, parameterized by a flow and another cost function, $\ell_i : \mathbb{R}_{\ge 0} \times \mathbb{R}^m_{\ge 0} \times \mathcal{C}  \rightarrow \mathbb{R}_{\ge 0}$ as
\begin{align}
	&\ell_i(f_i;\hz_i, c_i) \nonumber \\
	& \hspace{1cm} := \begin{cases} b_i + \sum_{j \in {[}j'-1{]}} a_i^j \hz^i_j+ a^{j'}_i(f_i - \sum_{j \in {[}j'-1{]}} \hz^i_j) \end{cases} \nonumber \\
	& \hspace{4cm} \text{for } v^{j'-1}_i \le f_i \le v^{j'}_i \label{eq:ell} \; ,
\end{align}
And similarly define $\ell$ as a column vector where its $i$th entry is $\ell_i$. In essence, we are defining a new function which treats flow as flow of type $1$ of $c_i$ until reaching $\hz^1_i$, then treats it as type two until reaching $\hz^2_i + \hz^1_i$, and so on. All remaining flow after exceeding the sum of flows in $\hz_i$ are treated as type $m$. We similarly define a social cost function $L$ such that
\begin{align}\label{eq:L}
	L(f;\hz,c) := \sum_{i \in {[}n{]} } f_i \ell_i(f_i;\hz_i, c_i) \; .
\end{align}

\begin{proposition}\label{prop:agg}
	For a network with latency function $c$ and Wardrop Equilibrium $\hz$, and $\hf$, $v^{j'}_i$, $\ell$, and $L$ defined as in \cref{eq:hf,eq:vj,eq:ell,eq:L},
\begin{enumerate}
	\item $L(\hf(\hz);\hz,c) = C(\hz)$, and
	\item if $\hz$ is a Wardrop Equilibrium for $c$, then $\hf$ is a Wardrop Equilibrium for $\ell$.
\end{enumerate}
\end{proposition}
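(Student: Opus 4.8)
The plan is to establish both claims by directly unwinding the definitions in \eqref{eq:hf}--\eqref{eq:L}, leaning throughout on the structural fact that every vehicle type sees the same latency on a road in our model: $c^j_i(\hz_i) = b_i + \langle a_i, \hz_i \rangle$ for all $j \in [m]$.

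For the first claim the key step is to evaluate $\ell_i$ at the aggregate flow $\hf_i(\hz_i) = \sum_{j\in[m]}\hz^j_i$. Since $v^{m-1}_i(\hz_i) = \sum_{j\in[m-1]}\hz^j_i$ and $v^m_i(\hz_i) = \infty$, the value $\hf_i(\hz_i)$ lies in the last piece ($j'=m$) of \eqref{eq:ell}, so the partial sum telescopes:
\begin{align*}
	\ell_i(\hf_i(\hz_i);\hz_i,c_i) &= b_i + \sum_{j\in[m-1]} a^j_i \hz^j_i + a^m_i\Big(\hf_i(\hz_i) - \sum_{j\in[m-1]}\hz^j_i\Big) \\
	&= b_i + \sum_{j\in[m]} a^j_i \hz^j_i,
\end{align*}
which is exactly the common latency $c^j_i(\hz_i)$ on road $i$; degenerate pieces (when some $\hz^j_i = 0$) cause no trouble since $\ell_i$ is continuous. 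Substituting into \eqref{eq:L},
\begin{align*}
	L(\hf(\hz);\hz,c) &= \sum_{i\in[n]} \hf_i(\hz_i)\,\ell_i(\hf_i(\hz_i);\hz_i,c_i) = \sum_{i\in[n]}\Big(\sum_{j\in[m]}\hz^j_i\Big)\big(b_i+\langle a_i,\hz_i\rangle\big) \\
	&= \sum_{i\in[n]}\sum_{j\in[m]} c^j_i(\hz_i)\,\hz^j_i = \sum_{i\in[n]}\langle c_i(\hz_i),\hz_i\rangle = C(\hz).
\end{align*}

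For the second claim I would reuse the same identity: at $\hf$ the aggregated latency on each road equals the original latency at $\hz$, i.e. $\ell_i(\hf_i(\hz_i);\hz_i,c_i) = c^j_i(\hz_i)$ for every $j$. Hence every path $p$ has the same cost under $(\ell,\hf)$ as under $(c,\hz)$, and that cost does not depend on type. I would then take any path $p$ carrying positive aggregate flow under $\hz$: some commodity $r$ has $f^{j_r}_p>0$, so the Wardrop condition for $\hz$ under $c$ gives $\sum_{i\in\mathcal{I}_p} c^{j_r}_i(\hz_i) \le \sum_{i\in\mathcal{I}_{p'}} c^{j_r}_i(\hz_i)$ for every alternative path $p'$ of that commodity, and rewriting both sides with $\ell$ and $\hf$ yields the Wardrop condition for $\hf$ under $\ell$. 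I would also note that $\ell_i(\cdot;\hz_i,c_i)$ is continuous and non-decreasing (the two boundary expressions at $f_i = v^{j'}_i$ agree and each $a^j_i \ge 0$), so it is a legitimate latency function and $\hf$ is a bona fide equilibrium of a single-class routing game.

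The telescoping identity and the substitution are routine; the one point that needs care is the commodity structure of the aggregated game, namely ensuring that ``positive aggregate flow on $p$'' invokes an equilibrium inequality ranging over exactly the path set of the aggregated commodity. This holds because collapsing vehicle types with a shared source--destination pair does not shrink the available path set; and in the parallel-network instances where this proposition is applied there is a single $s$--$t$ pair, so the issue disappears entirely.
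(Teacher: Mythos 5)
Your proof is correct. The paper actually states Proposition~\ref{prop:agg} without giving a proof, so there is nothing to compare against; your argument---evaluating $\ell_i$ at the aggregate flow so that the piecewise expression telescopes to $b_i + \langle a_i, \hz_i\rangle$, which is the common per-type latency under $c$, and then transferring the per-commodity Wardrop inequalities to the aggregated single-class game by merging types with a shared source--destination pair---is exactly the intended (and essentially the only) argument, and your attention to the boundary/degenerate pieces and to the commodity structure of the aggregated game covers the only points that need care. One minor inaccuracy in your closing remark: the proposition is invoked in the proof of Theorem~\ref{thm:poa}, which concerns general networks rather than only parallel ones, but this is harmless since your merging argument already handles the general multicommodity case.
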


\begin{lemma}\label{lma:agg_poa}
	For any Wardrop Equilibrium flow $\hf$ and socially optimal flow $\fs$ of a network with cost function $L(\cdot;\hz,c)$, $$L(\hf;\hz,c) \le 4/3 L(\fs;\hz,c) \; .$$
\end{lemma}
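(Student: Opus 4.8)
The plan is to reduce this statement to the classical Price of Anarchy bound for single-type affine congestion games. Observe that, by construction, $\ell_i(f_i;\hz_i,c_i)$ is a continuous, piecewise-linear, nondecreasing function of the scalar flow $f_i$ on road $i$: on each interval $[v^{j'-1}_i, v^{j'}_i]$ it has slope $a^{j'}_i \ge 0$, and the pieces are joined continuously at the breakpoints $v^{j'}_i$. Thus $L(\cdot;\hz,c)$ is the social cost of a single-type (scalar-flow) routing game on the same graph, but with convex, piecewise-affine edge latencies rather than affine ones. I would first verify that each $\ell_i$ is indeed nondecreasing and convex in $f_i$: nondecreasing because each $a^{j'}_i\ge0$, and convex provided the slopes are nondecreasing across breakpoints --- if they are not already sorted, one may reorder the vehicle types on road $i$ by increasing $a^j_i$ (which does not change $\hf_i$ or the value $C(\hz)$, by Proposition~\ref{prop:agg}), so that the construction yields a convex $\ell_i$. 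This is the one place that requires a little care.

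Next I would invoke the standard variational-inequality / $(\lambda,\mu)$-smoothness argument for congestion games with convex latencies. Since $\hf$ is a Wardrop Equilibrium for $\ell$ (by Proposition~\ref{prop:agg}, part 2), the Variational Inequality gives $\langle \ell(\hf), \hf - \fs\rangle \le 0$, hence
\begin{align*}
L(\hf;\hz,c) = \langle \ell(\hf), \hf\rangle \le \langle \ell(\hf), \fs\rangle = \sum_{i\in{[}n{]}} \ell_i(\hf_i)\, \fs_i \; .
\end{align*}
For each road $i$ one then bounds $\ell_i(\hf_i)\,\fs_i$ using the pointwise inequality that underlies the $4/3$ bound for affine (and, more generally, concave-marginal) latencies. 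Concretely, for a nondecreasing convex piecewise-affine $\ell_i$ through the origin-region and any nonnegative reals $x=\hf_i$, $y=\fs_i$, one has the Pigou-type estimate $\ell_i(x)\,y \le \ell_i(y)\,y + \tfrac14 \ell_i(x)\,x$; summing over $i$ gives $\langle \ell(\hf),\fs\rangle \le L(\fs;\hz,c) + \tfrac14 L(\hf;\hz,c)$, and rearranging yields $L(\hf;\hz,c) \le \tfrac43 L(\fs;\hz,c)$. The cleanest route is to note that each $\ell_i$ is a nonnegative combination (in the sense of a maximum / piecewise construction) of affine functions, for which the $(\lambda,\mu)=(1,\tfrac14)$ smoothness inequality is classical \cite{roughgarden2002bad, correa2008geometric}; the piecewise-affine convex case inherits the same constant.

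The main obstacle I anticipate is justifying the pointwise inequality $\ell_i(x)\,y \le \ell_i(y)\,y + \tfrac14\ell_i(x)\,x$ for the piecewise-affine $\ell_i$ rather than a single affine function. For a single affine $\ell_i(u)=a u + b$ this is the elementary fact $a x y + b y \le a y^2 + b y + \tfrac14(ax^2+bx)$ (true since $b\ge0$ and $axy \le ay^2 + \tfrac14 ax^2$ by AM--GM). For the piecewise version, I would either (i) appeal to the general smoothness framework, which is closed under taking maxima and nonnegative scalings of latency functions satisfying the same $(\lambda,\mu)$, or (ii) argue directly: since $\ell_i$ is convex and nondecreasing, $\ell_i(x) \le \ell_i'(x^-)\,x + \ell_i(0)$ is not quite what is needed, so instead bound $\ell_i(x)(y-x) \le$ (value of the affine piece active at $x$, evaluated telescoped) and reduce to the affine case on the relevant piece, absorbing the error into $\tfrac14 \ell_i(x) x$. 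Option (i) is shortest and is what I would write, deferring the elementary verification to a citation. Everything else --- the reduction via Proposition~\ref{prop:agg}, the Variational Inequality step, and the summation --- is routine.
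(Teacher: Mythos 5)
Your overall strategy is the same as the paper's: pass to the aggregated scalar game via Proposition~\ref{prop:agg}, use the Variational Inequality for $\hf$, and close the gap with a per-edge Pigou/smoothness inequality with constant $1/4$ (the paper carries this out as the bound $\beta(\mathcal{L})\le 1/4$). However, the one step you yourself flag as requiring care is resolved in the wrong direction, and the argument fails there. You propose reordering the types on each road by \emph{increasing} $a^j_i$ so that $\ell_i$ becomes convex, and then claim the $(1,\tfrac14)$-smoothness inequality $\ell_i(x)\,y\le\ell_i(y)\,y+\tfrac14\ell_i(x)\,x$ is inherited because the smoothness class is ``closed under taking maxima.'' It is not: smoothness is preserved under nonnegative \emph{linear} combinations, not pointwise maxima, and convex piecewise-affine latencies are genuinely worse than affine ones. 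Concretely, take $m=2$, $b_i=0$, $\hz^1_i=\hz^2_i=1$, and the increasing order $a^1_i=1$, $a^2_i=k$; then $x=\hf_i=2$ gives $\ell_i(x)=1+k$, while $y=1$ gives $\ell_i(y)=1$, so $\ell_i(x)\,y=1+k$ exceeds $\ell_i(y)\,y+\tfrac14\ell_i(x)\,x=(3+k)/2$ for every $k>1$. The pointwise inequality on which your summation rests is therefore false for the convex ordering, and the constant $4/3$ cannot be recovered this way.

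The fix is the opposite ordering, which is what the paper uses: label the types so that $a^1_i\ge a^2_i\ge\dots\ge a^m_i$, making $\ell_i$ \emph{concave}, nondecreasing, with $\ell_i(0)=b_i\ge 0$. For such functions the inequality does hold: for $y\le x$, concavity together with $\ell_i(0)\ge 0$ gives $\ell_i(y)\ge (y/x)\,\ell_i(x)$, hence $\ell_i(x)\,y-\ell_i(y)\,y\le \ell_i(x)\,y(x-y)/x\le\tfrac14\ell_i(x)\,x$, and the case $y\ge x$ is immediate from monotonicity. Note that this relabeling is legitimate only because it is part of the \emph{construction} of $L$ in the proof of Theorem~\ref{thm:poa}: the quantities $\ell_i(\hf_i)=c_i(\hz_i)$, $L(\hf;\hz,c)=C(\hz)$, and the equilibrium property of $\hf$ are all independent of the ordering, so one may build $L$ with the concave ordering from the outset; it is not a free normalization applied to an already-given $L$. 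With that correction, the rest of your argument (Variational Inequality, per-edge bound, summation, rearrangement) coincides with the paper's proof.
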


\begin{lemma}\label{lma:agg_opt_cost}
	For socially optimal flow $\sz$ of a network with cost function $C$ and socially optimal flow $\fs$ relative to a network with cost function $L(\fs;\hz,c)$ (where $C$ has the same latency function as the argument to $L$), if $c$ has maximum degree of asymmetry $k$, then $$L(\fs;\hz,c) \le k C(\sz) \; .$$
\end{lemma}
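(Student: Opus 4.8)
The plan is to exhibit a feasible flow for the network with cost function $L(\cdot;\hz,c)$ whose cost is at most $k\,C(\sz)$, and then invoke optimality of $\fs$. The natural candidate is the flow $w$ obtained by aggregating the optimal routing $\sz$ over vehicle types, $w_i := \sum_{j \in [m]} \sz^j_i$. Summing the flow-conservation and demand constraints satisfied by $\sz$ over vehicle types shows that $w$ is feasible for the aggregated network underlying $L(\cdot;\hz,c)$, so optimality of $\fs$ gives $L(\fs;\hz,c) \le L(w;\hz,c)$. Since $L(w;\hz,c) = \sum_{i} w_i\,\ell_i(w_i;\hz_i,c_i)$ and $C(\sz) = \sum_i \langle c_i(\sz_i),\sz_i\rangle$, it then suffices to prove the road-by-road bound $w_i\,\ell_i(w_i;\hz_i,c_i) \le k\,\langle c_i(\sz_i),\sz_i\rangle$ for each $i$ and sum.

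For the road-by-road bound, write $a_i^{\min} := \min_{j} a_i^j$ and $a_i^{\max} := \max_{j} a_i^j$. There are two observations. First, because all vehicle types experience the same latency, $c_i(\sz_i)$ is the constant vector with entries $\langle a_i,\sz_i\rangle + b_i$, so $\langle c_i(\sz_i),\sz_i\rangle = (\langle a_i,\sz_i\rangle + b_i)\,w_i \ge (a_i^{\min}w_i + b_i)\,w_i$, using $\langle a_i,\sz_i\rangle \ge a_i^{\min}\sum_j \sz^j_i = a_i^{\min}w_i$. Second, inspecting the piecewise-linear definition \eqref{eq:ell}: on each segment $v_i^{j'-1} \le f_i \le v_i^{j'}$, $\ell_i$ equals $b_i$ plus a sum of nonnegative terms whose coefficients lie in $\{a_i^1,\dots,a_i^m\}$ and whose total flow weight is exactly $f_i$ (this includes the unbounded last segment $j'=m$, since there $f_i - \sum_{j<m}\hz^j_i \ge 0$); hence $\ell_i(f_i;\hz_i,c_i) \le b_i + a_i^{\max}f_i$ for all $f_i \ge 0$. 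Combining these with $a_i^{\max} \le k\,a_i^{\min}$ (definition of $k$) and $b_i \le k\,b_i$ (as $k \ge 1$ and $b_i \ge 0$),
\begin{equation*}
\begin{aligned}
w_i\,\ell_i(w_i;\hz_i,c_i) &\le w_i\bigl(a_i^{\max}w_i + b_i\bigr) \le k\,w_i\bigl(a_i^{\min}w_i + b_i\bigr)\\
&\le k\,\langle c_i(\sz_i),\sz_i\rangle .
\end{aligned}
\end{equation*}
Summing over $i$ yields $L(w;\hz,c) \le k\,C(\sz)$, and together with $L(\fs;\hz,c) \le L(w;\hz,c)$ this proves the lemma.

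I expect the delicate points to be bookkeeping rather than estimation: first, making the bound $\ell_i(f_i) \le b_i + a_i^{\max}f_i$ fully rigorous across all segments of \eqref{eq:ell}, including the last unbounded one, by checking that the "flow weights" of the affine terms always sum to $f_i$; and second, verifying that the type-aggregate $w$ of $\sz$ is genuinely a feasible flow for the network whose cost is $L(\cdot;\hz,c)$ --- i.e., that that network's demand between each source--destination pair is exactly the sum over vehicle types of the corresponding original demands. Neither step requires a new idea, so once the definitions are pinned down the argument is short.
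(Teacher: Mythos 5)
Your proof is correct and follows essentially the same route as the paper's: both compare $\fs$ against the type-aggregate of $\sz$ (feasible for the $L$-network) and then establish the road-by-road bound by sandwiching $\ell_i(w_i;\hz_i,c_i)\le b_i+a_i^{\max}w_i$ from above and $\langle c_i(\sz_i),\sz_i\rangle\ge (b_i+a_i^{\min}w_i)w_i$ from below, losing exactly the factor $k$. The paper packages the lower bound via an auxiliary flow $w_i(z_i;c_i)$ that dumps all of $\sz_i$ onto the least-congesting type, but that is the same estimate you write directly.
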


\subsection{Proofs of Theorems and Lemmas}

\emph{Proof of Theorem~\ref{thm:poa}.}

We prove this theorem by putting together Proposition~\ref{prop:agg} with Lemmas~\ref{lma:agg_poa} and \ref{lma:agg_opt_cost}, and combining it with Lemma~\ref{lma:poa1} as follows. We begin with a network with cost function $c$ with maximum degree of asymmetry $k$. By Proposition~\ref{prop:agg}, for any Wardrop Equilibrium $\hz$ we can construct cost function $L(f;\hz,c_i)$ with equilibrium flow $\hf(\hz)$ such that $$C(\hz) = L(\hf(\hz);\hz,c) \; .$$ By Lemma~\ref{lma:agg_poa}, for optimal routing $\fs$ for $L$, $$L(\hf(\hz);\hz,c) \le 4/3 L(\fs;\hz,c) \; , $$ and by Lemma~\ref{lma:agg_opt_cost}, for optimal routing $\sz$ relative to the original cost function $c$, $$L(\fs;\hz,c) \le k C(\sz) \; .$$ Thus, $$C(\hz) \le 4k/3 C(\sz) \; .$$ Combined with Lemma~\ref{lma:poa1}, this proves the theorem. \hfill \qed

\emph{Proof of Lemma~\ref{lma:poa1}.}
	We prove the Lemma using a tool from \cite{lazar2020routing} which extends the technique in \cite{correa2008geometric} to non-monotone cost functions. From Lemma~1 of \cite{lazar2020routing}, we have, for Wardrop Equilibrium $\hz$ and optimal routing $\sz$,
\begin{align}
	&C(\hz) \le \frac{1}{1-\beta(\mathcal{C})}C(\sz) \; , \text{where} \nonumber \\
	&\beta(\mathcal{C}) := \sup_{c \in \mathcal{C}, \hz \in \mathbb{R}^{nm}_{\ge 0}, z \in \mathcal{Z}} \frac{\langle c(\hz) - c(z), z \rangle}{\langle c(\hz), \hz \rangle} \; , \label{eq:poa_mechanics}
\end{align}
where $\mathcal{Z}$ denotes the set of feasible routings. We use $\mathcal{C}_k$ to denote the set of latency functions with maximum degree of asymmetry bounded by $k$. Similarly, let us use $\mathcal{A}_k$ to denote the set of vectors in $\mathbb{R}^m_{\ge 0}$ such that the ratio of any two elements is less than $k$. Then,
\begin{align*}
	&\beta(\mathcal{C}_k) \le \sup_{c_i \in \mathcal{C}_k, \hz_i, z_i  \in \mathbb{R}^{m}_{\ge 0}} \frac{\langle c_i(\hz_i) - c_i(z_i), z_i \rangle}{\langle c_i(\hz_i),\hz_i \rangle} \\
	&=\sup_{a_i \in \mathcal{A}_k, b_i \in \mathbb{R}_{\ge 0},\hz_i, z_i  \in \mathbb{R}^{m}_{\ge 0}}  \frac{(\sum_{j \in {[}m{]}}z^j_i)(\sum_{j \in {[}m{]}}a^j_i(\hz^j_i -z^j_i)) }{(\sum_{j \in {[}m{]}}\hz^j_i)(b_i + \sum_{j \in {[}m{]}}a^j_i \hz^j_i)} \\
	& \le \sup_{a_i \in \mathcal{A}_k,\hz_i, z_i  \in \mathbb{R}^{m}_{\ge 0}}  \frac{(\sum_{j \in {[}m{]}}z^j_i)(\sum_{j \in {[}m{]}}a^j_i(\hz^j_i -z^j_i)) }{(\sum_{j \in {[}m{]}}\hz^j_i)(\sum_{j \in {[}m{]}}a^j_i \hz^j_i)} \; .
\end{align*}
where the first inequality results from all terms in the denominator being nonnegative. Without loss of generality, we assign vehicle type labels such that on the road $i$ being considered, $a^1_i \ge a^2_i \ge \hdots \ge a^m_i$. Then, 
\begin{align*}
	\beta(\mathcal{C}_k) & \le \sup_{a_i \in \mathcal{A}_k,\hz_i, z_i  \in \mathbb{R}^{m}_{\ge 0}}  \frac{a^1_i(\sum_{j \in {[}m{]}}z^j_i)(\sum_{j \in {[}m{]}}\hz^j_i -z^j_i) }{a^m_i(\sum_{j \in {[}m{]}}\hz^j_i)(\sum_{j \in {[}m{]}}\hz^j_i)} \\
	& \le \sup_{\hz_i, z_i  \in \mathbb{R}^{m}_{\ge 0}}  k \frac{(\sum_{j \in {[}m{]}}z^j_i)(\sum_{j \in {[}m{]}}\hz^j_i)-(\sum_{j \in {[}m{]}}z^j_i)^2 }{(\sum_{j \in {[}m{]}}\hz^j_i)^2} \\
	& = \sup_{\hz^j_i, z^j_i  \in \mathbb{R}_{\ge 0}}  k \frac{z^j_i \hz^j_i -(z^j_i)^2 }{(\hz^j_i)^2} \\
	& = k/4 \; ,
\end{align*}
where the first inequality results from the expression being nonnegative, and the last expression is found by maximizing with respect to $z^j_i$, with $z^j_i$ and $\hz^j_i$ both greater than zero. With this, we find that for $k<4$, $\Lambda(k) = 4/(4-k)$. \hfill \qed

\emph{Proof of Lemma~\ref{lma:agg_poa}.}
	In this lemma we bound the Price of Anarchy of the latency function defined in  \cref{eq:hf,eq:vj,eq:ell,eq:L}. To do this, we apply \eqref{eq:poa_mechanics} to $L(\hf;\hz,c)$. Let us define $\mathcal{L}$ as the set of functions $L$, and let us define $\mathcal{F}$ as the set of feasible routings for $L$. Then,
\begin{align*}
	&\beta(\mathcal{L}) = \sup_{c \in \mathcal{C}, \hz \in \mathbb{R}^{nm}_{\ge 0}, f \in \mathcal{F}} \frac{\langle \ell(\hf(\hz);\hz,c) - \ell(f;\hz,c), f \rangle}{\langle \ell(\hf(\hz);\hz,c), \hf(\hz) \rangle} \\
	&\le \sup_{c_i \in \mathcal{C}, \hz_i \in \mathbb{R}^{m}_{\ge 0}, f_i \ge 0} \frac{(\ell_i(\hf_i(\hz_i);\hz_i,c_i) - \ell_i(f_i;\hz_i,c_i)) f_i }{(\ell_i(\hf_i(\hz_i);\hz_i,c_i)) \hf_i(\hz_i)} \\
	& \le \sup_{\substack{a_i \in \mathbb{R}^m_{\ge 0} \\ \hz_i \in \mathbb{R}^{m}_{\ge 0} \\ f_i \ge 0} } \frac{f_i\big(\sum\limits_{ j \in {[}m{]} }a^j_i \hz^j_i - \hspace{-6pt} \sum\limits_{j \in {[}j'-1{]} } \hspace{-4pt} a^j_i \hz^j_i - a^{j'}_i(f- \hspace{-8pt} \sum\limits_{j \in {[}j'-1{]}} \hspace{-4pt} \hz^j_i)   \big) }{( \sum\limits_{ j \in {[}m{]} } \hz^j_i )\sum\limits_{ j \in {[}m{]} }a^j_i \hz^j_i} \\
	& = \sup_{a_i \in \mathbb{R}^m_{\ge 0}, \hz_i \in \mathbb{R}^{m}_{\ge 0}, f_i \ge 0 } \frac{f_i \big( \sum\limits_{ j =j' }^m a^j_i \hz^j_i  - a^{j'}_i f +a^{j'}_i\sum\limits_{j \in {[}j'-1{]}} \hz^j_i \big) }{( \sum_{ j \in {[}m{]} } \hz^j_i )\sum_{ j \in {[}m{]} }a^j_i \hz^j_i} \\
	&\le  \sup_{a_i \in \mathbb{R}^m_{\ge 0}, \hz_i \in \mathbb{R}^{m}_{\ge 0}, f_i \ge 0 } \frac{f_i \big( a^{j'}_i\sum\limits_{ j =j' }^m  \hz^j_i  - a^{j'}_i f +a^{j'}_i\sum\limits_{j \in {[}j'-1{]}} \hz^j_i \big) }{( \sum\limits_{ j \in {[}m{]} } \hz^j_i )(a^{j'}_i \sum\limits_{ j \in {[}j'-1{]} } \hz^j_i + a^{j'}_i\sum\limits_{ j =j' }^m \hz^j_i ) } \\
	&=  \sup_{ \hz_i \in \mathbb{R}^{m}_{\ge 0}, f_i \ge 0 } \frac{f_i(\sum_{ j \in {[}m{]} }  \hz^j_i  - f ) }{\big( \sum_{ j \in {[}m{]} } \hz^j_i\big)^2 } \; ,
\end{align*}
where the nonnegative terms in the summation in the inner product in the denominator implies the first inequality, the second inequality stems from choosing $b_i=0$. To achieve the final inequality, we, without loss of generality, choose the vehicle indices order for road $i$ such that $a^1_i \ge a^2_i \ge \hdots \ge a^m_i$. This way, setting $a^j_i = a^{j'}_i$ for $j < j'$ minimizes the denominator, and setting $a^j_i = a^{j'}_i$ for $j \ge j'$ maximizes a term common to the numerator and denominator. We then maximize the expression with respect to $f$, choosing it to be equal to $1/2 \sum_{ j \in {[}m{]} } \hz^j_i$, so 
\begin{equation*}
	\beta(\mathcal{L}) \le 1/4 \; .
\end{equation*}

This proves the lemma. \hfill \qed

\emph{Proof of Lemma~\ref{lma:agg_opt_cost}.}

First define $w_i : \mathbb{R}^m_{\ge 0} \times \mathcal{C}$ such that
\begin{equation*}
	[w_i(z_i;c_i)]_{j'} = \begin{cases}
	\sum_{j \in {[}m{]}}z^j_i & \text{if } j'=\argmin_{j \in {[}m{]}}a^j_i \\
	0 & \text{otherwise}
	\end{cases} \; .
\end{equation*}
In words, $w_i$ puts all the flow in $z_i$ as the vehicle type that congests the least according to cost function $c_i$. Then, for $\fs$ as an optimal routing with respect to $L(f;\hz,c)$ and $\sz$ as an optimal routing with respect to $C$, 
\begin{align*}
	L(\fs; \hz,c) &= \sum_{i \in {[}n{]}}f^*_i \ell_i(f^*_i;\hz_i,c_i) \\
	&\le \sum_{i \in {[}n{]}} \sum_{j \in {[}m{]}} \sz^j_i \ell_i(\sum_{j \in {[}m{]}} \sz^j_i;\hz_i, c_i) \\
	&\le  \sum_{i \in {[}n{]}} \sum_{j \in {[}m{]}} \sz^j_i k c_i(w_i(z^*_i;c_i)) \\
	&\le  \sum_{i \in {[}n{]}} \sum_{j \in {[}m{]}} \sz^j_i k c_i(z^*_i) = k C(\sz) \; ,
\end{align*}
where the first inequality stems from $f^*$ as a minimizer of the cost with respect to $L$, the second inequality is because $k c_i(w_i(\sz_i;c_i)) \ge b_i + \sum_{j \in {[}m{]}}z^{*j}_i \max_{j\in{[}m{]}} a^j_i \ge \ell_i(\sum_{j \in {[}m{]}} \sz^j_i;\hz_i, c_i)$, and the third inequality is from the construction of $w_i$.

Thus, $L(\fs; \hz,c) \le k C(\sz)$, proving the lemma. \hfill \qed

\emph{Proof of Theorem~\ref{thm:anonymous}.}
To prove this theorem, we use the aggregate cost function provided in Proposition~\ref{prop:agg}, then prove a Price of Anarchy bound on this aggregate cost function with tolls applied, and then bound the cost of the optimal routing with respect to the aggregate cost function as compared to the original cost function using Lemma~\ref{lma:agg_opt_cost}.

Accordingly, we first define $\hf$, $v^{j'}_i$, $\ell$, and $L$ as in \cref{eq:hf,eq:vj,eq:ell,eq:L}, and by Prop.~\ref{prop:agg}, $C(\hz) = L(\hf(\hz);\hz,c)$, and $\hf(\hz)$ is a Wardrop Equilibrium for $L(\cdot;\hz,c)$. In an abuse of notation, we redefine $\tau \in \mathbb{R}^n_{\ge 0}$ as a column vector with elements $\tau_i \in \mathbb{R}_{\ge 0}$. We then define
\begin{align*}
	&\ell^{\tau_i}_i(f_i;\hz_i,c_i) := \ell_i(f_i;\hz_i,c_i) + \tau_i \; \text{and}  \\
	&[\ell^{\tau}(f;\hz,c)]_i := \ell^{\tau_i}_i(f_i;\hz_i,c_i) \; .
\end{align*}

Then,
\begin{align*}
	L(\hf(\hz);\hz,c) &= \langle \ell(\hf(\hz);\hz,c),\hf(\hz) \rangle \\
	&= \langle \ell^{\tau}(\hf(\hz);\hz,c),\hf(\hz) \rangle - \tau^T \hf(\hz) \\
	&\le \langle \ell^{\tau}(\hf(\hz);\hz,c), \fs \rangle - \tau^T \hf(\hz) \\
	&= \langle \ell(\hf(\hz);\hz),\fs \rangle - \tau^T(\hf(\hz) - \fs) \\
	&\le L(\fs ;\hz,c) + \gamma(\mathcal{C}_k) L(\hf(\hz);\hz,c) \; ,
\end{align*}
where
\begin{align*}
	&\gamma(\mathcal{C}_k) = \sup_{\substack{c \in \mathcal{C}_k \\ \hz\in\mathbb{R}^{nm}_{\ge 0} \\ f \in \mathcal{F}}}\frac{\langle \ell(\hf(\hz);\hz,c) - \ell(f;\hz,c), f \rangle - \tau^T(\hf - f)}{\langle \ell(\hf(\hz);\hz,c), \hf \rangle} \\
	& \le \max_{\substack{c_i \in \mathcal{C}_k \\ \hz_i\in\mathbb{R}^{m}_{\ge 0} \\ f_i \in \mathbb{R}_{\ge 0}  }}\frac{ (\ell_i(\hf_i(\hz_i);\hz_i,c_i) - \ell_i(f_i;\hz_i,c_i)) f_i  - \tau_i(\hf_i - f_i)}{\ell_i(\hf_i(\hz_i),\hz_i,c_i) \hf_i } \\
	&\le \max_{\substack{a_i \in \mathcal{A}_k \\ \hz_i\in\mathbb{R}^{m}_{\ge 0} \\ f_i \in \mathbb{R}_{\ge 0} }}\frac{ f_i \big(\sum\limits_{j}^m a_i^j \hz^j_i - a^{j'}_i(f_i - \hspace{-10pt} \sum\limits_{j \in {[}j'-1{]}} \hspace{-6pt} \hz^j_i )  - a^m_i ( \hspace{-4pt} \sum\limits_{j \in {[}m{]}} \hspace{-4pt} \hz^j_i - f_i)  \big) } {( \sum_{j \in {[}m{]}} a_i^j \hz^j_i)(\sum_{j \in {[}m{]}} \hz^j_i) } \\
	&= \max_{\substack{a_i \in \mathcal{A}_k \\ \hz_i\in\mathbb{R}^{m}_{\ge 0} \\ f_i \in \mathbb{R}_{\ge 0}}}\frac{ f_i \big(\sum\limits_{j}^m (a^j_i-a^m_i ) \hz^j_i - (a^{j'}_i - a^m_i)(f_i - \hspace{-6pt} \sum\limits_{j \in {[}j'-1{]}} \hspace{-4pt} \hz^j_i )  \big) } {( \sum_{j \in {[}m{]}} a_i^j \hz^j_i)(\sum_{j \in {[}m{]}} \hz^j_i) } \\
	&\le \max_{\substack{a_i \in \mathcal{A}_k \\ \hz_i\in\mathbb{R}^{m}_{\ge 0} \\ f_i \in \mathbb{R}_{\ge 0}}}\frac{ f_i (a^{j'}_i-a^m_i )\big(\sum_{j}^m  \hz^j_i - (f_i -\sum_{j \in {[}j'-1{]}} \hz^j_i )  \big) } {a^{j'}_i( \sum_{j \in {[}m{]}}  \hz^j_i)(\sum_{j \in {[}m{]}} \hz^j_i) } \\
	&\le \max_{ \hz_i\in\mathbb{R}^{m}_{\ge 0} , f_i \in \mathbb{R}_{\ge 0}}\frac{ (k-1 )f_i \big(\sum_{j}^m  \hz^j_i - (f_i -\sum_{j \in {[}j'-1{]}} \hz^j_i )  \big) } {k( \sum_{j \in {[}m{]}}  \hz^j_i)(\sum_{j \in {[}m{]}} \hz^j_i) } \\
	&= \max_{ \hz_i\in\mathbb{R}^{m}_{\ge 0} , f_i \in \mathbb{R}_{\ge 0}}\frac{ (k-1 )f_i \big(\sum_{j \in {[}m{]}}  \hz^j_i - f_i   \big) } {k( \sum_{j \in {[}m{]}}  \hz^j_i)(\sum_{j \in {[}m{]}} \hz^j_i) }
\end{align*}
where the first inequality stems from the terms in the denominator being nonnegative. As before, without loss of generality we choose the vehicle indices such that on road $i$, $a^1_i \ge a^2_i \ge \hdots \ge a^m_i$. The second inequality stems from the constant term in the latency, $b_i$, being nonnegative and appearing only in the denominator. The third inequality stems from the fact that $a^j_i$ is monotonically nonincreasing in the index $j$, and the final inequality is from the definition of the maximum degree of inequality, $k$.  We can then maximize with respect to $f_i$ by setting $f_i = 1/2 \sum_{j \in {[}m{]}}  \hz^j_i$. Then,
\begin{align*}
	&\gamma(\mathcal{C}_k) \le \frac{ (k-1 )} {4k} \; \text{, so} \\
	&L(\hf(\hz);\hz,c) \le 4k/(3k-1) L(\fs ;\hz,c) \; .
\end{align*}
Then by Lemma~\ref{lma:agg_opt_cost}, $L(\fs ;\hz,c) \le k C(\sz)$, where $\sz$ is an optimal routing with respect to the latency function $c$. Together this proves the theorem. \hfill \qed

\subsection{Proof of Lemma~\ref{lma:same_roads_unique}}
The proof of this Lemma derives from the proof of Theorem~2 in \cite{lazar2020optimal}. The lemma statement yields the same conditions as the Theorem statement for the aforementioned theorem -- where the Theorem relies on toll $P$ to keep vehicle types on the roads in which they were in $\sz$, the lemma statement explicitly stipulates this condition. Then, the proof of the theorem in \cite{lazar2020optimal} provides statement 1. To prove the second statement, we note that the tolls are the path tolls of \cite{dafermos1973toll}, which, for a flow demand equal to that of the demand used to generate $\prevflow$, the flow for determining the tolls, guarantees the existence of an equilibrium with flow equal to $\prevflow$. This existence, with the uniqueness proven in the previous statement, proves the second lemma statement. \hfill \qed

\subsection{Proof of Lemma~\ref{lma:relative_costs}}
The reasoning for this statement is as follows. Due to the fact that $j$ was not on $i$ in $z^*$, and from the structure of the tolls, we know that $c^{\tau_j}_{i}(\hat{z}_{i}) = c^{\overline{\tau}}_i(\hat{z}_{i}) + \epsilon$. By definition of Wardrop Inequality, due to $\hat{z}^j_i > 0$, we know $c^{\tau_j}_i(\hat{z}_i) \le c^{\tau_j}_{i'}(\hat{z}_{i'})$ for all other roads $i'$ in ${[}n{]}$. Then, there is some road $i'$ such that $ c^{\tau_j}_{i'}(\hat{z}_{i'}) = c^{\overline{\tau}}_{i'}(\hat{z}_{i'})$, (since there is at least one road $i'$ with positive flow of type $j$ in $z^*$). From Wardrop Equilibrium we find $c^{\tau_j}_i(\hat{z}_i) \le c^{\tau_j}_{i'}(\hat{z}_{i'})$. As we've stated, $c^{\tau_j}_{i}(\hat{z}_{i}) = c^{\overline{\tau}}_i(\hat{z}_{i}) + \epsilon$, and since $i' \in \mathcal{N}^{z^*}_j$, we have $c^{\tau_j}_{i'}(\hat{z}_{i'}) = c^{\overline{\tau}}_{i'}(\hat{z}_{i'})$. This gives us
\begin{equation}
c^{\overline{\tau}}_{i}(\hat{z}_{i}) + \epsilon = c^{\overline{\tau}}_{i'}(\hat{z}_{i'}) \; .
\end{equation}

From Proposition \ref{prop:max_gap} and the definitions of $\underline{c}$ and $\overline{c}$ in \eqref{eq:min_max_cost}, we can state that
\begin{equation*}
\begin{aligned}
	&c^{\overline{\tau}}_{i}(\hat{z}_{i}) = \underline{c} \quad \text{and} \\
	&c^{\overline{\tau}}_{i'}(\hat{z}_{i'}) = \overline{c} \; .
\end{aligned}
\end{equation*}
\vspace{-10pt} \hfill \qed

\subsection{Proof of Lemma~\ref{lma:std_cost_greater_flow}}
By definition of $\mathcal{I}_1$,
\begin{equation}
	\sum_{i \in \mathcal{I}_1} \hat{z}^j_i = \sum_{i \in \mathcal{I}_1} z^{*j}_i \quad \forall i' \in \mathcal{I}_1, \;  \forall j \in \mathcal{M}^{z^*}_{i'} \; .
\end{equation}

In words, all vehicle types on $\mathcal{I}_1$ in $z^*$ will be on those roads as well in $\hat{z}$, in addition to some vehicle flow that was not there in $z^*$. Then, we can show by contradiction that the resulting standard cost $c^{\overline{\tau}}_i$ for the roads in $\mathcal{I}_1$ will be greater than or equal to $\mu$ as follows. From \eqref{eq:standard_cost_partitions}, we know that the standard cost will be the same on all roads in $\mathcal{I}_1$. By definition of the toll structure, the standard cost of the roads in $\mathcal{I}_1$ with routing $z^*$ will be $\mu$. In $\hat{z}$ there is not less flow of any type to route on $\mathcal{I}_1$, and cost functions are nondecreasing in the vehicle flows. If $c^{\overline{\tau}}_i(\hat{z}_i) = c_i(\hat{z}_i) + \mu - c_i(z^{*}_i) < \mu$ for all roads in $\mathcal{I}_1$, then $z^*$ would not be an optimal routing, since this new routing can decrease the social cost from what it was in $z^*$. By this contradiction, 
\begin{equation*}
	c^{\overline{\tau}}_i(\hat{z}_i) \ge \mu \quad \forall i \in \mathcal{I}_1 \; .
\end{equation*}
\vspace{-10pt} \hfill \qed

\subsection{Proof of Lemma~\ref{lma:std_cost_lesser_flow}}
By definition of $\mathcal{I}_2$,
\begin{equation}
	\sum_{i \in \mathcal{I}_2} \hat{z}^j_i \le \sum_{i \in \mathcal{I}_2} z^{*j}_i \quad \forall j \in {[}m{]} \; ,
\end{equation}
and each road has only the vehicle flow types that it did in $z^*$, since $\mathcal{I}_1 \cap \mathcal{I}_2 = \emptyset$ (from their different values of $c^{\overline{\tau}}_i$ of their roads -- otherwise we arrive at a contradiction earlier on). From Lemma~\ref{lma:same_roads_unique}, each road having the vehicle flow types that it did in $z^*$ implies the uniqueness of equilibrium flow. Since the cost functions are nondecreasing in the vehicle flows, the equilibrium flow $\hz$ has standard cost less than or equal to that of the routing in $z^*$, which is $\mu$. Accordingly, 
\begin{equation*}
	c^{\overline{\tau}}_i(\hat{z}_i) \le \mu \quad \forall i \in \mathcal{I}_2 \; .
\end{equation*}
\hfill \qed

\end{document}